\documentclass{amsart}
\pdfoutput=1
\usepackage{mathrsfs}
\usepackage{amsmath}
\usepackage{amsthm}
\usepackage{amssymb}
\usepackage{indentfirst}

\usepackage{geometry}             
\usepackage[dvipsnames]{xcolor}                                         
\usepackage{graphicx}                           
\usepackage{amssymb,mathrsfs}
\usepackage{amsthm,amsmath}
\usepackage{stmaryrd}
\usepackage{tikz}
\usepackage{tikz-cd}
\usepackage{accents,upgreek,enumerate}
\usepackage[headings]{fullpage}
\usepackage{bm}
\usepackage{mathtools}
\usepackage[all]{xy}
\usepackage{caption}
\usepackage[euler-digits]{eulervm}
\usepackage[normalem]{ulem}

\usepackage[utf8x]{inputenc}

\usepackage{ifthen}

\theoremstyle{plain} 
\newtheorem{prop}{Proposition}[section]
\newtheorem{thm}[prop]{Theorem}
\newtheorem{lem}[prop]{Lemma} 
\newtheorem{cor}[prop]{Corollary}

\newtheorem{rmk}[prop]{Remark}

\theoremstyle{remark}

\theoremstyle{definition}
\newtheorem{defn}[prop]{Definition}
\newcommand{\hk}{hyperk\"{a}hler }

\newcommand{\kntype}{$K3^{[n]}$ type}

\newcommand{\Sym}{\text{Sym}}

\newcommand{\R}{\mathbb R}
\newcommand{\Q}{\mathbb Q}
\newcommand{\C}{\mathbb C}
\newcommand{\Z}{\mathbb Z}

\newcommand{\rank}{\mathrm{rank}}
\newcommand{\disc}{\mathrm{disc}}
\newcommand{\Kum}{\mathrm{Kum}}

\begin{document}

\makeatletter
\def\@settitle{\begin{center}%
  \baselineskip14\p@\relax
    \bfseries
    \normalfont\LARGE
  \@title
  \end{center}%
}
\makeatother

\title[Correpondences for Hyperk\"ahler varieties]{Correspondences for hyperk\"ahler varieties with large Picard numbers}

\author{Ljudmila Kamenova, Abhinav Kumar}
\begin{abstract}
In this note, we explore the connection between hyperk\"ahler
manifolds with large Picard numbers and abelian varieties. In
particular, we are interested in Morrison's solution to the (modified)
Oda's conjecture: every K3 surface whose Picard group is large enough
(in a certain precise sense) must be related via an algebraic
correspondence to an abelian surface. We generalize this theorem to
the case of known examples of hyperk\"ahler manifolds such as pointed
Hilbert schemes on K3 surfaces.
\end{abstract}
\date{July 10, 2026}
\address{Ljudmila Kamenova, 
Department of Mathematics, Office 3-115, 
Stony Brook University, 
Stony Brook, NY 11794-3651, USA}
\email{kamenova@math.stonybrook.edu}

\address{Abhinav Kumar,
Department of Mathematics,
Stony Brook University, 
Stony Brook, NY 11794-3651, USA}
\email{thenav@gmail.com}

\bibliographystyle{amsalpha}
\maketitle
\section{Introduction}

An important aspect of the study of $K3$ surfaces is the relationship
to abelian surfaces. Namely, given an abelian surface $A$, the
resolution of the quotient $A/\{\pm 1\}$ is a $K3$ surface, {\it a
  Kummer $K3$ surface}.  Kummer $K3$ surfaces play an essential role
in the study of all $K3$ surfaces, for instance they are dense in the
moduli space of $K3$ surfaces, and one can use this fact to prove
Torelli-type theorems for $K3$ surfaces. \\

A Kummer $K3$ surface has $\rank(NS(X))\ge 17$, or equivalently,
$\rank(T(X))\le 5$. A remarkable theorem of Morrison \cite{mor} shows
that, in fact, every $K3$ surface with a large Picard number (e.g.,
$\rho(X)\ge 19$) is closely related to a Kummer surface.  More
precisely, suppose that there is a primitive embedding $\phi: T(X)
\hookrightarrow U^3$ of lattices, where $U$ is the hyperbolic lattice
of rank $2$. Equivalently, suppose that there is an embedding
$E_8(-1)^2 \hookrightarrow NS(X)$. Then there exists a Nikulin
involution on $X$ with quotient birational to $\Kum(A)$, such that the
quotient maps from $X$ and $A$ induce a Hodge isometry $T(S)\cong
T(A)$. Such a structure on $X$ is labelled a Shioda-Inose
correspondence with the abelian surface $A$. \footnote{The abelian
surface is not necessarily unique. However, there are only finitely
many choices; see Theorem \ref{thm:fm-partners} in the appendix.} \\

In this note, we are interested in similar questions for
\hk manifolds, which are natural higher dimensional
generalizations of $K3$ surfaces. There are two guiding principles
here: (1) a \hk manifold $X$ is ``essentially" determined
by its transcendental lattice $T(X)$, or rather its Hodge structure
along with the embedding $T(X)\hookrightarrow H^2(X,Z)$; see
Proposition~\ref{prop:TX-torelli} below for a more precise
formulation, and (2) $T(X)$ can be ``small" enough (for example, when
it has rank $2$) to embed into the second cohomology of an abelian
variety, and thus it can be assumed $T(X)\cong T(A)$ for
some\footnote{Once again, not necessarily unique, but determined up to
a finite choice.} abelian variety $A$. By the Hodge conjecture, $X$
and $A$ are related by a correspondence inducing $T(X)_\Q\cong
T(A)_\Q$. The purpose of this paper is to explore this correspondence
more explicitly in the case of the known \hk manifolds with
large Picard number in the spirit of Morrison's results for
$K3$ surfaces. \\

All the known \hk manifolds ($K3^{[n]}$, $\Kum_n$, OG6 and
OG10) arise from moduli of vector bundles/sheaves on K3 surfaces and
abelian surfaces. Thus, restricting to certain loci in moduli, they
will be naturally in correspondence with symplectic surfaces: either
$K3$ or abelian surfaces.  Recently, there has been renewed interest
in \hk manifolds with large Picard number and their
relation to K3 and abelian surfaces -- see esp. \cite{po} and
\cite{pm} (also \cite{L-max} for a different related question). Here,
we sharpen those results a bit, and focus on understanding the
connection to abelian surfaces in a uniform way. \\



Returning to K3 surfaces, in the situation of Morrison's theorem,
there is a Shioda-Inose correpondence between a K3 surface $X$ such
that $T(X) \hookrightarrow U^3$, and an abelian surface $A$, through
the common quotient $\Kum(A)$. More generally, one can ask under which
conditions there is a correspondence between $X$ and an abelian
surface $A$. This is described by the following conjecture of Oda,
modified and proved by Morrison (see the addendum at the end of
\cite{mor}, using foundational results of Mukai on the moduli of
sheaves on K3 surfaces \cite{muk}).

\begin{thm} (\cite{mor, muk}) \label{om}
Let $X$ be an algebraic K3 surface, and suppose that there is an
embedding $\phi: T(X) \otimes \Q \hookrightarrow U^3 \otimes \Q$ of
$\Q$-lattices. Then there exists an abelian surface $A$ and a
correspondence between $X$ and $A$ which induces a Hodge isometry $T(X)
\otimes \Q \cong T(A) \otimes \Q$.
\end{thm}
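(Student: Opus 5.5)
We will reduce the statement to the integral version of Morrison's theorem (the Shioda--Inose structure recalled above) by passing through moduli spaces of sheaves, following Mukai. Write $T=T(X)$, of signature $(2,\rho')$ with $\rho'=20-\rho(X)$; the hypothesis forces $\rank T\le 5$.

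\textbf{Step 1: from a rational to an integral embedding.} Since $\phi$ is only a $\Q$-isometric embedding, $\phi(T)$ need not be a primitive sublattice of $U^3$. Let $L=\phi(T_\Q)\cap U^3$, which is primitive in $U^3$, and let $L'=\phi(T)\cap L$, a finite-index sublattice of $L$. The lattices $T$ and $L$ are rationally isometric, and $L$ carries the Hodge structure transported from $T_\Q$, since a weight-two Hodge structure of K3 type on a rational quadratic space is determined by the line $H^{2,0}$. The goal is a K3 surface $Y$ with $T(Y)\cong L$ Hodge isometrically, together with a correspondence $X\vdash Y$ inducing $T(X)_\Q\cong T(Y)_\Q$. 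Surjectivity of the period map, together with the existence of a primitive embedding $L\hookrightarrow\Lambda_{K3}$ extending $L\hookrightarrow U^3\subset\Lambda_{K3}$, gives the K3 surface $Y$ with $T(Y)\cong L$. Morrison's integral theorem then applies to $Y$, since $T(Y)$ embeds primitively in $U^3$. It yields a Shioda--Inose structure, hence an abelian surface $A$ and a correspondence $Y\vdash A$ (through $\Kum(A)$) inducing $T(Y)\cong T(A)$ integrally up to the factor $2$ on the quotient map. Composing correspondences finishes the proof once $X\vdash Y$ exists.

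\textbf{Step 2: relating $X$ and $Y$ (main obstacle).} We must connect two K3 surfaces whose transcendental lattices are rationally Hodge isometric but not integrally. Mukai's theory provides the tool. If $T'\subset T(X)$ is a sublattice of finite index with cyclic quotient, or more generally an overlattice of $T(X)$ realized by a Brauer class, then a K3 surface $Y$ with $T(Y)\cong T'$ arises as a two-dimensional moduli space $M_X(v)$ of (twisted) stable sheaves on $X$. The universal family gives an algebraic cycle in $X\times Y$ whose action is the Mukai correspondence, and it induces a Hodge isometry of $T(X)$ onto the appropriate sub- or overlattice of $T(Y)$, hence $T(X)_\Q\cong T(Y)_\Q$.

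Any two rationally isometric lattices are connected by a chain of index-$p$ sublattice and overlattice moves, and rational isometries decompose into such steps via reflections. The plan is therefore to factor the rational Hodge isometry $T\otimes\Q\cong L\otimes\Q$ into such elementary moves. At each stage we realize the intermediate Hodge lattice as the transcendental lattice of some K3 surface $Y_i$, which is possible because $\rank\le 5$ lets the lattice embed in $\Lambda_{K3}$. We then connect consecutive surfaces by a Mukai moduli correspondence.

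The main difficulty is showing that each elementary step is realized by an honest moduli space, possibly of twisted sheaves. This is exactly the input from \cite{muk} invoked in Morrison's addendum: for $\rho\ge 12$, or in general after rank considerations, the needed Mukai vectors exist in the algebraic part of the Mukai lattice. Composing all the resulting cycles gives the desired correspondence between $X$ and $A$.
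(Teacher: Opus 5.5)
Your proposal follows the same route as the paper (Morrison's addendum). You saturate $\phi(T(X)_\Q)\cap U^3$ to get a primitive $L\subset U^3$, realize it as $T(Y)$ via surjectivity of the period map, apply the integral Shioda--Inose theorem to $Y$, and connect $X$ to $Y$ through Mukai's results on moduli of sheaves, which apply because $\rank T(X)\le 5$ forces $\rho\ge 17$. Your Step 2 unpacks Mukai's theorem where the paper simply cites it (K3 surfaces with $\rho\ge 11$ and rationally Hodge-isometric transcendental lattices are linked by a correspondence). In that sketch the directions are reversed: untwisted moduli spaces on $X$ give overlattices of $T(X)$, while twisted ones give the finite-index sublattices $\ker(\alpha)$. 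This does not affect the argument.
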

Note that the necessary condition is obvious (since $T(A)
\hookrightarrow H^2(A,\Q) \cong U^3$), while the theorem is implied by
the Hodge conjecture. Morrison's (unconditional) proof of the theorem
is as follows: suppose $\phi$ is the embedding as in the hypothesis,
and let $T = U^3 \cap \phi(T(X) \otimes \Q)$. Then $T$ is a lattice
with a primitive embedding in $U^3$, and the right
signature. Therefore, there is a K3 surface $Y$ with $T(Y) \cong T$,
and such that $Y$ is related to an abelian surface $A$ by a
Shioda-Inose correspondence. Since $T(X) \otimes \Q \cong T(Y) \otimes
\Q$ is a Hodge-isometry, and $T(X)$ and $T(Y)$ have rank $\leq 5$, the
K3 surfaces have Picard number $\geq 17$. In particular, they satisfy
the hypothesis of Mukai's theorem (which only requires Picard number
$\geq 11$), implying $X$ and $Y$ are related by a correspondence, and
therefore so are $X$ and $A$.\\

We remark that Buskin \cite{bus} generalized Mukai's theorem (removing
the Picard number $\geq 11$ condition) to show that if two K3 surfaces
$X$ and $Y$ are such that there is a Hodge isometry $H^2(X,\Q) \cong
H^2(Y,\Q)$, then there exists an algebraic correspondence between
$X$ and $Y$ inducing the Hodge isometry. \\

\begin{rmk} 
We note in passing that the condition $T(X) \otimes \Q \hookrightarrow
U^3 \otimes \Q$ is automatically satisfied when $\rank(T(X)) \in
\{2,3\}$; in fact then $T(X) \hookrightarrow U^3$ (see \cite[Cor
  2.5]{mor}). However, when $\rank(T(X)) \in \{4,5\}$, there are
non-trivial arithmetic conditions on $T(X)$ for it to be rationally
embeddable in three copies of the hyperbolic plane. For instance, when
$T$ has rank $4$ (signature $(2,2)$), it must represent $0$ (i.e.,
have an isotropic vector) in order for $T \otimes \Q \hookrightarrow
U^3 \otimes \Q$; on the other hand, any even $T$ of signature $(2,2)$
is $T(X)$ for some K3 surface $X$. Similarly, when $T$ has rank $5$
(signature $(2,3)$), it can be shown that $T(X) \otimes \Q
\hookrightarrow U^3 \otimes \Q$ iff $T(X)$ has an isotropic sublattice
of rank $2$, i.e., two vectors $u, v$ such that $u^2 = v^2 = u \cdot v
= 0$.\footnote{For a proof of these criteria see
Lemma~\ref{lem:embedding-in-h2abelian} below.}

\end{rmk}

The question that we are addressing here is that of finding a
correspondence between known \hk manifolds $X$ of large Picard ranks
and abelian surfaces $A$.

Our generalization of Morrison's solution to Oda's conjecture to \hk
varieties of \kntype~ is the following.

\begin{thm}
Let $X$ be a projective \hk manifold of \kntype. If there is an
embedding $\phi: T(X) \otimes \Q \hookrightarrow U^3 \otimes \Q$ of
$\Q$-lattices, then there exists an abelian variety $A$ and a
correspondence between $X$ and $A$ which induces a Hodge isometry $T(X)
\otimes \Q \cong T(A) \otimes \Q$. \\
\end{thm}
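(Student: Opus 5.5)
The plan is to reduce to the K3 case, Theorem~\ref{om}, by realizing $X$ up to a rational Hodge isometry of transcendental lattices as a moduli space of sheaves on a K3 surface, and then using the universal family as the correspondence.

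\textbf{Step 1: the transcendental lattice.} For $X$ of \kntype\ we have $H^2(X,\Z)\cong U^3\oplus E_8(-1)^2\oplus\langle -2(n-1)\rangle$, which sits inside the Mukai lattice $\Lambda=U^4\oplus E_8(-1)^2$ as $v^\perp$ with $v^2=2n-2$. Hence $T(X)$ embeds primitively into $\Lambda$ and is orthogonal to $v$ and to all algebraic classes. Since $X$ is projective, $\rank T(X)\le 21$. The hypothesis gives $\phi\colon T(X)_\Q\hookrightarrow U^3_\Q$, so $\rank T(X)\le 6$; because $T(X)$ has signature $(2,\rank T(X)-2)$, in fact $\rank T(X)\le 5$.

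\textbf{Step 2: an auxiliary K3 surface.} As in Morrison's argument, put $T=U^3\cap\phi(T(X)_\Q)$. This is a primitive sublattice of $U^3\subset\Lambda_{K3}$ of signature $(2,\le 3)$. Transport the Hodge structure of $T(X)$ to $T$ via $\phi$. Surjectivity of the period map gives a K3 surface $Y$ with $T(Y)\cong T$ as Hodge lattices. $Y$ is projective, since $NS(Y)$ contains a positive class because $\rank T\le 5$. By Theorem~\ref{om}, or directly via Shioda--Inose since $T\hookrightarrow U^3$ primitively, there is an abelian surface $A$ and a correspondence $\Gamma_1$ between $Y$ and $A$ inducing $T(Y)_\Q\cong T(A)_\Q$.

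\textbf{Step 3: from $X$ to a moduli space on $Y$.} Choose a primitive Mukai vector $w\in H^*_{\mathrm{alg}}(Y)$ with $w^2=2n-2$ and a $w$-generic polarization. Then $M=M_Y(w)$ is a projective \hk manifold of \kntype, and by Mukai/O'Grady $H^2(M,\Z)\cong w^\perp$ as Hodge structures, with $T(M)\cong T(Y)$. This needs $NS(Y)$ to contain $U$, so that $w=(1,0,1-n)$ works and $M=Y^{[n]}$; this holds because $\rho(Y)\ge 17$. The universal sheaf, or its quasi-universal version, gives an algebraic cycle on $Y\times M$ whose Mukai-vector correspondence induces $T(Y)\cong T(M)$. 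Now $T(X)_\Q\cong T(M)_\Q$ is a rational Hodge isometry between two projective \hk manifolds of \kntype\ with transcendental rank $\le 5$.

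\textbf{Step 4: relating $X$ and $M$, the main obstacle.} We need an algebraic correspondence between $X$ and $M$ realizing this rational Hodge isometry. The route I would try first is to avoid a general Mukai/Buskin-type theorem for hyperk\"ahlers. Since $T(X)$ embeds in the Mukai lattice orthogonally to algebraic classes, its orthogonal complement contains $U$ plus a large algebraic lattice. Because $\rank T(X)\le 5$ and the total rank is $24$, the complement of $T(X)$ in $\Lambda$ has rank $\ge 19$. Nikulin's embedding criteria then show the extended transcendental lattice $T(X)\oplus U\subset\Lambda$ is realized by a K3 surface. Concretely, $T(X)$ embeds primitively into $\Lambda_{K3}$, so there is a K3 surface $S$ with $T(S)\cong T(X)$ Hodge-isometrically and with $\Lambda$-embeddings matched. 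The Torelli theorem for \kntype\ (Markman), via Proposition~\ref{prop:TX-torelli}, then shows $X$ is birational to a moduli space $M_S(v')$ of stable objects on $S$, possibly twisted. This gives an algebraic correspondence between $X$ and $S$ inducing $T(X)\cong T(S)$.

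The possible Brauer twist is the point I expect to be delicate. It is handled by the fact that a twisted universal family still gives an algebraic cycle rationally. Finally, $S$ and $Y$ are K3 surfaces with $T(S)_\Q\cong T(Y)_\Q$ and Picard number $\ge 17\ge 11$. Mukai's theorem (or Buskin's) gives a correspondence between $S$ and $Y$. Composing $X\to S\to Y\to A$ gives the required correspondence inducing $T(X)_\Q\cong T(A)_\Q$.
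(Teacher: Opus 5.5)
Your argument is correct. Its skeleton matches the paper's: first a K3 surface $S$ with $T(S)\cong T(X)$ as Hodge lattices, then Markman's Torelli-type result (Piroddi--R\'ios Ortiz, Thm.~3.7) to see $X$ birationally as a moduli space on $S$, then the K3 case.

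The real difference is how you produce the correspondence between $X$ and $S$. You compose the graph of the birational map $X\dashrightarrow M_S(v')$ with the Mukai map $\theta_{v'}$ of a quasi-universal family. The paper instead invokes Markman's theorem that rational Hodge isometries between projective $K3^{[n]}$-type manifolds are algebraic, to pass from $X$ to $S^{[n]}$. It then uses the Hilbert--Chow/diagonal correspondence $S^{[n]}\sim S$.

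What each route buys:
\begin{itemize}
\item Your route is more elementary: it needs only the classical Mukai--O'Grady--Yoshioka description of $H^2(M_S(v'))$. It does, however, genuinely depend on the birational model.
\item Once Markman's algebraicity theorem is granted, that model, and even integrality of $T(S)\cong T(X)$, become unnecessary. Your own Step~3 would then finish at once via $X\sim Y^{[n]}\sim Y\sim A$, after extending $T(X)_\Q\cong T(Y^{[n]})_\Q$ to $H^2$ by Witt cancellation.
\item Your Nikulin argument makes explicit what the paper outsources to Piroddi--R\'ios Ortiz. Uniqueness of the primitive embedding of $T(X)$, of rank $\le 5$, into $U^4\oplus E_8(-1)^2$ forces an algebraic $U\subset T(X)^\perp$. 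This also shows the moduli space is untwisted, so the Brauer-twist issue does not arise.
\end{itemize}

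Some minor points:
\begin{itemize}
\item On your route, Steps~2--3 are superfluous. Theorem~\ref{om} applies to $S$ directly, since $T(S)_\Q\cong T(X)_\Q\hookrightarrow U^3_\Q$. The $M$ of Step~3 is never used.
\item Proposition~\ref{prop:TX-torelli} is only a finiteness statement. The input that gives birationality is the Markman/Piroddi--R\'ios Ortiz theorem of Section~2.
\item The vector $(1,0,1-n)$ is algebraic on every K3 surface, so no condition $U\subset NS(Y)$ is needed to use $Y^{[n]}$.
\end{itemize}
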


\begin{rmk}
One can show, similar to the remark above, that the hypothesis of the
theorem above is automatically satisfied if $\rho \in \{20,21\}$,
whereas for $\rho = 19$, it is satisfied iff $T(X)$ has an isotropic
vector, and for $\rho = 18$, iff $T(X)$ has an isotropic 2-dimensional
sublattice.
\end{rmk}

\hfill 

{\bf Acknowledgements:} We are grateful to Radu Laza, Eyal Markman and
Giovanni Mongardi for interesting conversations related to this
project. LK is partially supported by a grant/award
SFI-MPS-TSM-00013537 from the Simons Foundation International.

\section{Preliminaries}

A {\it hyperk\"ahler manifold} is a simply connected compact K\"ahler
manifold $X$ with a holomorphic symplectic form $\sigma \in H^0(X,
\Omega_X^2)$ which is unique up to rescaling. The known classes of
hyperk\"ahler examples are: Hilbert schemes of $n$ points on a $K3$
surface and their deformations (that is, of \kntype), generalized
Kummer varieties $\Kum_n$ in dimension $2n$, and the exceptional
examples due to O'Grady OG6 and OG10 in dimensions $6$ and $10$.

\begin{defn}
Let $X$ be a hyperk\"ahler manifold. The transcendental lattice of $X$
is the lattice $T(X) = NS(X)^\perp$, where the orthogonal complement
is taken in terms of the Beauville-Bogomolov-Fujiki form $q_X$. As a
lattice, $T(X)$ is endowed with the Beauville-Bogomolov-Fujiki form
$q_X$ restricted to $T(X)$ and the weight-two Hodge structure induced
by the lattice $(H^2(X, \mathbb Z), q_X)$.
\end{defn}

\begin{defn}
A pure integral Hodge structure $V$ of weight two is of K3-type if
$\dim_{\mathbb C} (V_{\mathbb C}^{2,0}) = 1$ and $V_{\mathbb C}^{p,q}
=0$ if $|p - q| > 2$.  Let $X$ be a projective hyperk\"ahler manifold
and let $(T,q)$ be a Hodge structure of K3-type.  We say that $X$ is
induced by $T$ if there exists a Hodge isometry $(T(X), q_X) \cong
(T,q)$, where $q_X$ is the Beauville-Bogomolov-Fujiki form of $X$.
\end{defn}

We recall the following proposition from \cite[Prop 2.12, Cor
  2.13]{po}, which makes precise the notion that the transcendental
lattice $T(X)$ with its Hodge structure controls the isomorphism class
of \hk $X$, up to finitely many choices:
\begin{prop} \label{prop:TX-torelli}
Let $(T,q)$ be a fixed Hodge structure of K3 type, and fix a lattice
$\Lambda$ of rank $\geq 5$. Then the set
\[
\{X \textrm{ \hk with } H^2(X,\Z) \cong \Lambda \textrm{ and induced by } (T,q) \}/\cong_{\textrm{iso}}
\]
is finite.
\end{prop}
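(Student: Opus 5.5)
The plan is to reduce the finiteness to three independent finiteness statements: one lattice-theoretic, one about periods via global Torelli, and one about birational models via the cone conjecture. Throughout I work within a fixed deformation type, so that there is a single monodromy group $\mathrm{Mon}^2 \subset O(\Lambda)$ attached to a connected component of the moduli space of marked \hk manifolds. If the intended statement ranges over all deformation types with $H^2 \cong \Lambda$, one additionally needs that only finitely many such types occur. I would treat that as an input from \cite{po} rather than prove it, and I flag it as a genuine gap in this plan.

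\textbf{Step 1 (embeddings).} Suppose $X$ is induced by $(T,q)$, and fix a marking $\eta: H^2(X,\Z)\cong\Lambda$. Then $\eta(T(X))$ is a primitive sublattice of $\Lambda$ isometric to $T$. By Nikulin's theory, $T$ has only finitely many primitive embeddings into $\Lambda$ up to the action of $O(\Lambda)$. The reason is that the orthogonal complement lies in one of finitely many genera, each genus contains finitely many classes, and there are finitely many gluing data. Since $\mathrm{Mon}^2$ has finite index in $O(\Lambda)$, there are also only finitely many $\mathrm{Mon}^2$-orbits of primitive embeddings $T\hookrightarrow\Lambda$.

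\textbf{Step 2 (periods).} Fix an embedding $\iota: T\hookrightarrow\Lambda$. The Hodge structure on $T$ determines the line $T^{2,0}\subset T_\C\subset\Lambda_\C$. Hence the period $\eta(H^{2,0}(X))$ is determined by $\iota$, together with the choice of one of the finitely many Hodge isometries of $T$; the group of Hodge isometries of a K3-type Hodge structure is finite cyclic. Consequently, the periods of all such $X$ lie in finitely many $\mathrm{Mon}^2$-orbits of points of the period domain. By Verbitsky's global Torelli theorem, in Markman's formulation, two \hk manifolds of this deformation type whose periods are $\mathrm{Mon}^2$-equivalent are bimeromorphic. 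So the set in question splits into finitely many bimeromorphism classes.

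\textbf{Step 3 (birational models).} It remains to show that a fixed projective \hk $X$ has only finitely many \hk birational models up to isomorphism. This is the main obstacle, and it is where the hypothesis $\rank\Lambda\geq5$ enters. I would invoke Markman's description of the birational models as the exceptional chambers of the movable cone, combined with the Kawamata--Morrison cone conjecture for \hk manifolds proved by Amerik--Verbitsky (and Markman--Yoshioka in the \kntype\ case). Their result says that $\mathrm{Bir}(X)$ acts on the movable cone with a rational polyhedral fundamental domain, so only finitely many chambers exist modulo $\mathrm{Bir}(X)$. Each birational model $X'$ corresponds to a chamber, and isomorphic models correspond to $\mathrm{Bir}(X)$-equivalent chambers, so finitely many models result. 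The rank condition is what Amerik--Verbitsky need; more precisely, they need $b_2\geq5$ for the finiteness of orbits of MBM classes of bounded square. Combining Steps 1--3 yields finiteness.
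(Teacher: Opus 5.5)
Your argument is correct for a fixed deformation type, and it is essentially the standard proof behind \cite[Prop 2.12]{po}; the paper quotes that result without giving a proof of its own. The proof runs: finitely many $\mathrm{Mon}^2$-orbits of primitive embeddings $T\hookrightarrow\Lambda$, then global Torelli to reduce to finitely many bimeromorphism classes, then finiteness of birational models up to isomorphism via Markman--Yoshioka and Amerik--Verbitsky, which is where $\rank\Lambda\geq 5$ enters. They state that finiteness directly, so you need not deduce it from the rational polyhedral fundamental domain, which is not a purely formal step.

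The one thing to change is how you handle deformation types. The paper only applies the proposition within a fixed deformation type (e.g.\ \kntype\ for fixed $n$), so adopt that reading rather than deferring to \cite{po} for finiteness of deformation types with a given $\Lambda$. As far as I know, that finiteness is not available for $\Lambda$ alone: Huybrechts' finiteness theorem also fixes further invariants such as the Fujiki constant.
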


In particular, if $(T,q)$ is the transcendental lattice $T(X_0)$ of a
fixed HK (say of K3$^{[n]}$-type for a fixed $n$), along with its
Hodge structure, the proposition says that the set of $X$ of type
K3$^{[n]}$-type with $T(X)$ Hodge-isometric to $T(X_0)$ is finite, up
to isomorphism.


Any hyperk\"ahler manifold of \kntype ~ with Picard rank $\rho(X) \geq
4$ is isomorphic to a moduli space of twisted stable sheaves on a $K3$
surface (and this is a sharp bound on the Picard rank) due to the
results of Prieto-Montanez \cite{pm}. If the Picard rank $\rho(X) \geq
13$, such a $K3$ surface is unique. More precisely, we have the
following results due to Markman \cite{mar}, see also Piroddi-Ortiz
\cite[Theorem 3.7, Corollary 3.9]{po}.

\begin{thm}
Let $X$ be a projective hyperk\"ahler manifold of \kntype ~ and $S$ a
projective $K3$ surface. Then $X$ is induced by the transcendental
lattice $T(S)$ if and only if $X$ is birational to the Mukai moduli
space $M_v(S,H)$ for some Mukai vector $v$ and a $v$-generic
polarization $H$.
\end{thm}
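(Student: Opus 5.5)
The statement is an equivalence, and the last theorem before this point is attributed to Markman and to Piroddi--Ortiz. So the natural plan is to reduce both directions to the lattice-theoretic description of the Markman--Mukai lattice together with the global Torelli theorem for manifolds of \kntype. Fix $X$ of \kntype. It carries Markman's extended lattice $\tilde H(X,\Z)\cong U^4\oplus E_8(-1)^2$, which is the Mukai lattice. There is a canonical primitive class $v_X\in\tilde H(X,\Z)$, determined up to sign, with $v_X^2=2n-2$ and $v_X^\perp\cong H^2(X,\Z)$ as Hodge structures. The weight-two Hodge structure on $\tilde H(X,\Z)$ is defined so that $v_X$ is of type $(1,1)$. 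In particular the transcendental lattice of $\tilde H(X,\Z)$ equals $T(X)$.

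\textbf{($\Leftarrow$).} Suppose $X$ is birational to $M_v(S,H)$.
\begin{enumerate}
\item Mukai's theorem (in the form proved by Yoshioka for arbitrary primitive $v$ with $v^2\ge 2$ and $v$-generic $H$) gives a Hodge isometry $\theta: v^\perp\subset \tilde H(S,\Z)\to H^2(M_v(S,H),\Z)$.
\item Since $v$ is algebraic, $\theta$ restricts to a Hodge isometry $T(S)\cong T(M_v(S,H))$.
\item A birational map between \hk manifolds induces a Hodge isometry on $H^2$, so $T(M_v(S,H))\cong T(X)$.
\end{enumerate}
Hence $X$ is induced by $T(S)$.

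\textbf{($\Rightarrow$).} Suppose $\psi: T(X)\cong T(S)$ is a Hodge isometry.
\begin{enumerate}
\item Both $T(X)\subset \tilde H(X,\Z)$ and $T(S)\subset\tilde H(S,\Z)$ are primitive embeddings into the same even unimodular lattice $U^4\oplus E_8(-1)^2$.
\item The orthogonal complements have signature $(2,\ast)$, namely $\tilde H^{1,1}$ algebraic parts containing $U$ or more. For instance $NS(S)\oplus U$ contains a copy of $U$, so by Nikulin's uniqueness criterion the embedding of $T(S)$ is unique up to isometry. Extend $\psi$ to an isometry $\Psi:\tilde H(X,\Z)\to \tilde H(S,\Z)$. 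It is automatically Hodge, since the Hodge structures are determined by $T$ and everything orthogonal to $T$ is of type $(1,1)$.
\item Set $v:=\Psi(v_X)$, a primitive algebraic Mukai vector on $S$ with $v^2=2n-2$. Choose $H$ to be $v$-generic. Then $\Psi|_{v_X^\perp}$ combined with Mukai's isometry $\theta$ gives a Hodge isometry $H^2(X,\Z)\cong H^2(M_v(S,H),\Z)$ coming from an isometry of extended lattices that sends $v_X$ to $v_{M}$.
\item By Markman's result, an isometry that extends to the Mukai lattices sending the canonical class to the canonical class (up to sign, possibly after composing with $-1$) is a parallel transport operator.
\item Markman's Hodge-theoretic Torelli theorem states that a parallel-transport Hodge isometry implies birationality. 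Hence $X$ is birational to $M_v(S,H)$.
\end{enumerate}

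The main obstacle is step 2 of ($\Rightarrow$): extending $\psi$ over the Mukai lattice. When $\rank NS(S)$ is small, the complement of $T(S)$ need not split off a $U$ in a way that guarantees uniqueness of the primitive embedding. Nikulin's criterion requires the complement to be indefinite of rank at least $\ell(A_T)+2$. That holds here because $\rank\, T^\perp=\rank NS+2\ge \ell+2$, but this needs care, for example via Nikulin 1.14.4 applied in the larger lattice $U^4\oplus E_8(-1)^2$. A secondary subtlety is the orientation/sign ambiguity in step 4, which is handled by composing with $-\mathrm{id}$ or a reflection preserving the Hodge structure.
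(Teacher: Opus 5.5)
Your argument is correct, and it is the standard proof of this result. The paper gives no argument of its own; it imports the statement from Piroddi--R\'ios Ortiz (Theorem 3.7) and Markman, and the standard proof there follows your route: extend $T(X)\cong T(S)$ to a Hodge isometry of Mukai lattices via Nikulin's Theorem~\ref{uniqueembedding}, where your bound $\ell(A_T)=\ell(A_{NS(S)})\le\rho(S)=\rank\tilde\Lambda-\rank T-2$ is exactly the needed hypothesis, and then apply Markman's characterization of parallel transport operators and his Hodge-theoretic Torelli theorem. The points you leave implicit are routine: $v$ must be primitive (cf.\ the paper's footnote) and can be made positive by a sign change so that Yoshioka's nonemptiness applies, $n\ge 2$ is tacit, and the compatibility of Mukai's $\theta$ with Markman's canonical orbit is part of Markman's theorem rather than something you need to prove.
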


\begin{cor}
Every hyperk\"ahler manifold of \kntype ~ with Picard rank $\rho(X)
\geq 13$ is induced by a unique $K3$ surface $S$.
\end{cor}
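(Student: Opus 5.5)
The corollary is essentially a lattice-theoretic consequence of the preceding theorem (Markman, Piroddi--Ortiz). By that theorem, uniqueness and existence of $S$ amount to showing two things. First, there is a projective K3 surface $S$ with a Hodge isometry $T(S)\cong T(X)$. Second, any two K3 surfaces with Hodge-isometric transcendental lattices of this size are isomorphic.

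For existence, I use that $X$ is of \kntype, so $H^2(X,\Z)\cong \Lambda_n:=U^3\oplus E_8(-1)^2\oplus\langle -2(n-1)\rangle$, which has rank $23$. The transcendental lattice $T(X)\subset\Lambda_n$ is primitive of signature $(2,\rho_T)$ with rank $r=23-\rho(X)\le 10$. I want a primitive embedding $T(X)\hookrightarrow \Lambda_{K3}=U^3\oplus E_8(-1)^2$. By Nikulin's criterion, an even lattice of signature $(2,r-2)$ embeds primitively into the unimodular lattice of signature $(3,19)$ as soon as $\rank T + \ell(A_T) \le 22$ (with some slack), or more simply whenever $2\,\rank T\le 22$ minus a small margin. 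Here $\ell(A_T)\le \min(r, 23-r)$, since $A_T\cong A_{NS}$ up to the discriminant contribution of the rank-one summand, and $\ell(A_{NS(X)})\le \rank NS(X)-?$; I will bound $\ell(A_T)\le \rank(T^\perp)+1=\rho(X)+1$ and also $\le r$. The sufficient condition $r+\ell(A_T)< 22$ then holds for $r\le 10$, which is exactly $\rho\ge 13$. With the embedding in hand, the surjectivity of the period map for K3 surfaces produces $S$ with $T(S)\cong T(X)$ as Hodge structures. $S$ is projective because $T(X)$ has signature $(2,r-2)$, so $NS(S)$ contains a positive class. The theorem then says $X$ is induced by $T(S)$.

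For uniqueness, suppose $S,S'$ are two such K3 surfaces. Then $T(S)\cong T(S')$ is a Hodge isometry with $\rank NS(S)=22-r\ge 12$. By Nikulin, the primitive embedding of $T$ into $\Lambda_{K3}$ is unique up to isometry once $\rank NS \ge \ell(A_T)+2$, and this holds since $\ell(A_T)\le r\le 10$. The Hodge isometry therefore extends to $H^2(S,\Z)\cong H^2(S',\Z)$, possibly after composing with $\pm1$ on $NS$ to preserve the positive cone. The global Torelli theorem then gives $S\cong S'$; equivalently, $S$ has no nontrivial Fourier--Mukai partners when $\rho(S)\ge 12$, in the sense of the appendix's Theorem~\ref{thm:fm-partners}.

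The main obstacle is the existence step: the discriminant bookkeeping relating $A_{T(X)}$ to $A_{NS(X)}$ inside the non-unimodular $\Lambda_n$, and pinning down a bound on $\ell(A_T)$ that lets Nikulin's existence criterion apply exactly at $\rho=13$. I would first try the bound $\ell(A_T)\le \ell(A_{NS(X)})+1\le \rho+1$ combined with $\ell\le r$. If that is not sharp enough, I would instead work with the overlattice $T(X)\oplus NS(X)\subset\Lambda_n$ and compare discriminant forms directly.
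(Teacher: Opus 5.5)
Your proposal is correct and follows essentially the standard argument behind the cited result; the paper itself only cites Markman and Piroddi--R\'ios Ortiz here. Since $\rank T(X)=23-\rho(X)\le 10$, the trivial bound $\ell(A_T)\le\rank T$ already puts $T(X)$ in the range of Nikulin's Theorem~\ref{uniqueembedding} (cf.\ the remark after it, $\rank M\le\rank L/2-1$), so $T(X)$ has a unique primitive embedding into $\Lambda_{K3}$, and surjectivity of the period map plus Torelli then give existence and uniqueness of $S$, just as in the paper's proof of Theorem~\ref{thm:oda-morrison-HK}. The discriminant bookkeeping you flag as the main obstacle is therefore unnecessary (as is $\ell(A_T)\le\rho(X)+1$; your intermediate claim $\ell(A_T)\le 23-r$ is false in general but harmless here), and no sign adjustment on $NS$ is needed, since any Hodge isometry $H^2(S,\Z)\cong H^2(S',\Z)$ suffices for Torelli.
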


In analogy to the results above, in \cite[Theorem 3.15, Corollary
  3.16]{po}, Piroddi-Ortiz also explore the generalized Kummer case.

\begin{thm}
Let $X$ be a hyperk\"ahler manifold of $\Kum^n$-type. Then $X$
is induced by $T(A)$ for an abelian surface $A$ if and only if $X$ is
birational to the generalized Kummer variety $K_v(A,H) =
\mathrm{Alb}^{-1} (0)$ associated with $A$, or to $K_v(A^\vee,H)$
associated with $A^\vee$, for some $v$-generic polarization $H$.
\end{thm}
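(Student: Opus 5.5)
The plan is to treat the two directions separately. Both go through the Mukai lattice $\widetilde H(A,\Z)=H^{\mathrm{even}}(A,\Z)\cong U^4$ of the abelian surface and the description of $H^2$ of generalized Kummer moduli spaces as $v^\perp$.

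\emph{The ``if'' direction.} Suppose $X$ is birational to $K_v(A,H)$, with $v$ primitive, $v^2=2n+2$ and $H$ $v$-generic. By Yoshioka's results there is a Hodge isometry $H^2(K_v(A,H),\Z)\cong v^\perp\subset \widetilde H(A,\Z)$. Since $v$ is algebraic, $T(A)\subset v^\perp$. The algebraic part of $v^\perp$ is $v^\perp\cap\widetilde H^{1,1}$, and its orthogonal complement inside $v^\perp$ is exactly $T(A)$. Hence $T(K_v(A,H))\cong T(A)$ Hodge-isometrically. Birational \hk manifolds have Hodge-isometric $H^2$ for the BBF form, so $T(X)\cong T(A)$. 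For $A^\vee$ we use the Hodge isometry $T(A)\cong T(A^\vee)$ induced by the Fourier--Mukai transform (Shioda).

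\emph{The ``only if'' direction.} Start from a Hodge isometry $T(X)\cong T(A)$. The lattice $\Lambda=H^2(X,\Z)\cong U^3\oplus\langle -2n-2\rangle$ has a canonical primitive embedding $\Lambda\hookrightarrow U^4$ (Mongardi, Wandel), unique up to isometry, with $\Lambda^\perp=\Z v$ and $v^2=2n+2$. This gives a weight-two Hodge structure on $U^4$, in which $v$ is of type $(1,1)$. It also gives a primitive embedding $T(X)\hookrightarrow U^4$ whose orthogonal complement is algebraic.

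On the other side, $T(A)\subset H^2(A,\Z)\subset\widetilde H(A,\Z)$. The key step is to find an isometry $g$ of $U^4$ with $g|_{T(X)}$ equal to the given Hodge isometry. Such a $g$ automatically carries the Hodge structure on $U^4$ coming from $X$ to the Mukai Hodge structure of $A$, because in both the $(2,0)$-part lies in the transcendental lattice and everything orthogonal to it is of type $(1,1)$. Then $w=g(v)$ is a primitive algebraic Mukai vector on $A$ with $w^2=2n+2$, and $g$ induces a Hodge isometry $H^2(X,\Z)\cong w^\perp\cong H^2(K_w(A,H),\Z)$ for a $w$-generic $H$.

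To extend the isometry, I would first try Nikulin's criterion. This works when $\rank T\le 3$. For $\rank T\in\{4,5\}$ it fails as stated, and there I would instead use that $T(A)^\perp\subset U^4$ contains the copy of $U=H^0\oplus H^4$. This copy can be used to absorb discriminant forms: the complement of $T(X)$ can be arranged to contain a hyperbolic plane, after possibly replacing $A$ by another abelian surface with the same $T$. By Shioda's Torelli for abelian surfaces, $T(A)$ together with its embedding into $U^3$ determines $A$ up to $A$, $A^\vee$ and finitely many choices.

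\emph{The main obstacle} is the final step: passing from a Hodge isometry $H^2(X)\cong H^2(K_w(A,H))$ compatible with the Mukai embedding to a birational map. This needs the global Torelli theorem for $\Kum^n$-type, in Mongardi's and Markman's formulation, and the isometry must be a parallel-transport operator. The monodromy group of $\Kum^n$-type has finite index in $O^+(\Lambda)$ and is cut out by an orientation character, together with the action on the discriminant $\Z/(2n+2)$. I would correct $g$ by $-1$ on $\Z v$, by reflections in the hyperbolic plane of the Mukai lattice, and by $\pm\mathrm{id}$, so that it becomes orientation-compatible. If orientation cannot be fixed on $A$, then composing with the Fourier--Mukai isometry $\widetilde H(A)\cong\widetilde H(A^\vee)$, which reverses the relevant orientation, places $X$ birational to $K_{w'}(A^\vee,H)$ instead. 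This is precisely the dichotomy $A$ versus $A^\vee$ in the statement. Checking this orientation and discriminant bookkeeping carefully is where I expect the real work to lie.
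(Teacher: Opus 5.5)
The paper does not prove this statement; it quotes it from \cite{po}, so your sketch has to stand on its own. Your ``if'' direction is fine.

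The extension step in the ``only if'' direction is fixable, but not as you describe it. Do not replace $A$ by another surface with the same $T$: you would land on a Fourier--Mukai partner rather than on $A$ or $A^\vee$. You also need no rank restriction. In $\widetilde H(A,\Z)$ one has $T(A)^\perp = NS(A)\oplus U$. This lattice is indefinite, and $l(A_{NS(A)\oplus U})\le \rank(NS(A)\oplus U)-2$. By Nikulin's Theorem 1.14.2 \cite{nik}, it is therefore unique in its genus, and $O(NS(A)\oplus U)\to O(q_{NS(A)\oplus U})$ is surjective. Now $T(X)^\perp\subset U^4$ has the same signature and discriminant form. Hence the given Hodge isometry extends to $g$ for every value of $\rank T(X)$.

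\textbf{The genuine gap is the Torelli step.} You defer it as bookkeeping, but it is the whole content of the ``only if'' direction, and the mechanism you sketch does not work.
\begin{itemize}
\item The input you need is the monodromy group $\mathrm{Mon}^2=\mathcal{N}(\Lambda)=\{h\in O^+(\Lambda): h \text{ acts on } A_\Lambda\cong\Z/(2n+2) \text{ by } \det(h)\}$ (Mongardi, Markman). It has index two in the group of orientation-preserving isometries acting by $\pm1$ on $A_\Lambda$.
\item Composing with $\pm\mathrm{id}$ fixes the orientation but never this second sign: on the rank-$7$ lattice, $-\mathrm{id}$ has determinant $-1$ and acts by $-1$ on $A_\Lambda$.
\item ``$-1$ on $\Z v$'' is not an isometry of $U^4$. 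It acts by $+1$ on $A_{v^\perp}$ and by $-1$ on $A_{\Z v}$, and these two groups are glued by an isomorphism of $\Z/(2n+2)$.
\item Composing with the Fourier--Mukai isometry $\widetilde H(A,\Z)\cong\widetilde H(A^\vee,\Z)$ cannot remove the obstruction. It comes from a derived equivalence, which induces birational maps between the corresponding moduli spaces compatibly with Yoshioka's $\theta_w$. So it preserves the monodromy class.
\item The Hodge isometry that does carry the nontrivial class is Namikawa's. It is Shioda's $H^2(A,\Z)\cong H^2(A^\vee,\Z)$, extended by the identity on $H^0\oplus H^4$.
\end{itemize}

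\textbf{How to close the argument.} Translate the criterion to the Mukai lattice. If an extension satisfies $\tilde h(v)=\chi(h)v$, then $\det\tilde h=\det(h)\chi(h)$, so the extra condition reads $\det\tilde h=1$. Now use the reflections in $(1,0,1)$ and $(1,0,-1)$, of squares $-2$ and $2$. Both fix $H^2(A)$ pointwise, so both are Hodge isometries of $\widetilde H(A,\Z)$. Both have determinant $-1$. The first preserves the orientation of positive $4$-planes, and the second reverses it.

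Suitable products of these reflections, together with $-\mathrm{id}$ (which keeps the image of $v$ a positive Mukai vector), correct both signs. They only move $w$ to another primitive algebraic vector of the same square. This is exactly why a hyperbolic plane in the algebraic part of the Mukai lattice suffices. The correction can be made on $A$ itself.

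So the $A^\vee$ in the statement is not an orientation defect that Fourier--Mukai repairs. It is the duality ambiguity in Shioda's Torelli theorem: a Hodge isometry $H^2(B,\Z)\cong H^2(A,\Z)$ only gives $B\cong A$ or $B\cong A^\vee$.
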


\begin{cor}
Every hyperk\"ahler manifold of $\Kum^n$-type with Picard rank
$\rho(X) \geq 4$ is induced by a unique abelian surface $A$ or its
dual $A^\vee$.
\end{cor}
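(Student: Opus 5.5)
The statement to prove is the last corollary: every hyperk\"ahler manifold $X$ of $\Kum^n$-type with $\rho(X)\ge 4$ is induced by a unique abelian surface $A$ or its dual $A^\vee$. The plan is to reduce it to the preceding theorem of Piroddi-Ortiz. It suffices to produce an abelian surface $A$ with a Hodge isometry $T(X)\cong T(A)$, and then to show that any two such surfaces coincide up to duality.

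\emph{Existence.} First I would record the lattices. We have $H^2(X,\Z)\cong U^3\oplus\langle -2(n+1)\rangle$, which has rank $7$ and signature $(3,4)$, while $H^2(A,\Z)\cong U^3$. If $\rho(X)\ge 4$, then $T(X)$ has rank at most $3$ and signature $(2,\rank T(X)-2)$. The key step is to construct a primitive embedding $T(X)\hookrightarrow U^3$. I would use Nikulin's existence criterion for primitive embeddings of an even lattice $T$ of signature $(t_+,t_-)$ into an even unimodular lattice of signature $(l_+,l_-)$. It applies when $l_+\ge t_+$, $l_-\ge t_-$, and $\rank(U^3)-\rank T > \ell(A_T)$, where $\ell(A_T)$ is the minimal number of generators of the discriminant group.

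Since $\ell(A_T)\le\rank T\le 3$, the inequality $6-\rank T>\ell(A_T)$ holds whenever $\rank T\le 2$. When $\rank T = 3$ it holds unless $\ell(A_T)=3$. I expect this borderline case, $\rho=4$ and $\rank T=3$, to be the main obstacle. There I would use that $T(X)\subset U^3\oplus\langle -2(n+1)\rangle$ is primitive, so its orthogonal complement $NS(X)$ has rank $4$. The gluing correspondence gives $\ell(A_{T})\le \ell(A_{NS}) + 1$ and also $\ell(A_T)\le \rank NS(X)$. These bounds are not sharp enough by themselves, so I would instead argue as Morrison does for K3 surfaces (his Cor.~2.5): a rank $\le 3$ even lattice of signature $(2,*)$ always embeds in $U^3$, using the refined Nikulin criterion that allows equality by checking the local conditions at each prime $p$ with $\ell(A_{T,p})=3$. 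If that also fails, I would fall back on the original argument of \cite{po}.

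Once the embedding exists, the Hodge structure on $T(X)$ transports to $U^3$, since the period lies in $T\otimes\C$ and satisfies the Riemann relations. Surjectivity of the period map for complex tori then gives a $2$-torus $A$ with $T(A)\cong T(X)$ Hodge-isometrically. Because $NS(A)=T^\perp$ contains a class of positive square (signature count), $A$ is projective, so $X$ is induced by $T(A)$.

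\emph{Uniqueness.} If $A$ and $B$ both induce $X$, then $T(A)\cong T(B)$ Hodge-isometrically. Since $\rank T(A)\le 3$, $NS(A)$ has rank at least $3$. By Shioda's Torelli theorem for abelian surfaces, the Hodge isometry extends to $H^2(A,\Z)\cong H^2(B,\Z)$ by Nikulin's extension result, which requires uniqueness of the primitive embedding $T\hookrightarrow U^3$ and holds when $\rank NS\ge \ell+2$. Such an isometry $H^2(A,\Z)\cong H^2(B,\Z)$ forces $B\cong A$ or $B\cong A^\vee$, according to whether it preserves orientation. If uniqueness of the embedding fails in borderline ranks, I would cite the corresponding statement from \cite{po}.
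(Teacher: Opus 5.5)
The paper gives no argument of its own here: the corollary is quoted from \cite[Thm.~3.15, Cor.~3.16]{po}. Your reduction is the natural proof: a primitive embedding $T(X)\hookrightarrow U^3$, surjectivity of the period map for $2$-tori, then Nikulin's uniqueness together with Shioda's Torelli theorem.

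\textbf{Existence.} This half works. The borderline case $\rank T=3$, $\ell(A_T)=3$ that you flag is harmless. The lattice $T(-1)$ has signature $(1,2)$ and $q_{T(-1)}=-q_T$. So $T\oplus T(-1)$, glued along the diagonal of $A_T\oplus A_{T(-1)}$ (an isotropic subgroup of order $|A_T|$), is an even unimodular lattice of signature $(3,3)$. By Milnor it is $U^3$, and $T$ is primitive in it. No appeal to \cite{po} is needed for existence.

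\textbf{The gap: uniqueness when $\rho(X)=4$.} Then $\rank T=\rank NS(A)=3$, and the condition $\rank NS(A)\ge\ell(A_T)+2$ you need becomes $\ell(A_T)\le1$, which is not automatic. Falling back on \cite{po} cannot close this, because uniqueness genuinely fails there. Here is an example.
\begin{enumerate}
\item Take $T=U(5)\oplus\langle 2\rangle$ with a very general period, so that $O_H(T)=\{\pm1\}$. Set $K=T(-1)\cong U(5)\oplus\langle -2\rangle$.
\item The $5$-part $A_{K,5}\cong(\Z/5)^2$ is a hyperbolic plane over $\mathbb{F}_5$. On it, $O(q_K)$ is dihedral of order $8$: the maps $\mathrm{diag}(t,t^{-1})$ and their composites with the swap.
\item Realize $K$ as the trace-zero part of the Eichler order $\mathcal{O}=\left(\begin{smallmatrix}\Z&5\Z\\ \Z&\Z\end{smallmatrix}\right)$ with form $2\det$. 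Since $K$ and $1$ generate $\mathcal{O}$ as a ring, $SO(K)\cong N(\mathcal{O})/\Q^*$.
\item Conjugation by $\gamma=\left(\begin{smallmatrix}a&5b\\ c&d\end{smallmatrix}\right)\in\mathcal{O}^\times$ acts on $A_{K,5}$ as $\mathrm{diag}(t,t^{-1})$ with $t=a^2/\det\gamma\equiv\pm1 \pmod 5$. The Atkin--Lehner element gives $\pm$ the swap. Hence the image of $O(K)$ in $O(q_K)$ has index $2$.
\item By \cite[Prop.~1.15.1]{nik}, there are then two primitive embeddings $T\hookrightarrow U^3$ with complement $K$ that are not related by $O(U^3)\times O_H(T)$. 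Note that $-1$ already lies in the image of $O(K)$, so $O_H(T)$ does not merge them.
\item The resulting abelian surfaces $A,B$ satisfy $T(A)\cong T(B)\cong T$ Hodge-isometrically. However, $H^2(A,\Z)\not\cong H^2(B,\Z)$ as Hodge lattices, so $B\not\cong A, A^\vee$ by Shioda.
\item A $\Kum^n$-type $X$ with $T(X)\cong T$ exists, because $T\subset U^3\subset U^3\oplus\langle -2(n+1)\rangle$. It has $\rho(X)=4$ and is induced by both $A$ and $B$.
\end{enumerate}

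Your argument is complete for $\rho(X)\ge5$, where $\rank T\le 2$ and $\ell(A_T)\le 2\le \rank NS(A)-2$. It is also complete for $\rho(X)=4$ with $\ell(A_{T(X)})\le 1$. The uniqueness claim as stated, for $\rho(X)\ge 4$, cannot be proved without such an extra hypothesis.
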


\section{Main result}

The generalization of Morrison's solution to Oda's conjecture to \hk
varieties of \kntype~ is as follows:
\begin{thm} \label{thm:oda-morrison-HK}
Let $X$ be a projective \hk manifold of \kntype. If there is an
embedding $\phi: T(X) \otimes \Q \hookrightarrow U^3 \otimes \Q$ of
$\Q$-lattices, then there exists an abelian variety $A$ and a
correspondence between $X$ and $A$ which induces a Hodge isometry $T(X)
\otimes \Q \cong T(A) \otimes \Q$. \\
\end{thm}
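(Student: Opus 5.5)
The plan is to reduce to the K3 case (Theorem~\ref{om}) by passing through a K3 surface $S$ that induces $X$.

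\textbf{Step 1: the Picard number of $X$ is large.} The image $\phi(T(X)\otimes\Q)$ sits inside $U^3\otimes\Q$, which has rank $6$ and signature $(3,3)$. Since $T(X)$ has signature $(2,\,\rank T(X)-2)$, this forces $\rank T(X)\le 5$. The second cohomology of a manifold of \kntype{} has rank $23$, so $\rho(X)\ge 18\ge 13$.

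\textbf{Step 2: a K3 surface inducing $X$.} By the Corollary above, $X$ is induced by a unique projective K3 surface $S$. That is, there is a Hodge isometry $T(X)\cong T(S)$. By the theorem of Markman and Piroddi--Ortiz, $X$ is birational to a Mukai moduli space $M_v(S,H)$ for a suitable Mukai vector $v$ and a $v$-generic polarization $H$. A birational map between hyperk\"ahler manifolds is an isomorphism in codimension one, so it induces a Hodge isometry on $H^2$. Its graph closure is therefore a correspondence realizing $T(X)\cong T(M_v(S,H))$.

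\textbf{Step 3: an algebraic correspondence between $M_v(S,H)$ and $S$.} Take the quasi-universal (twisted) family $\mathcal E$ on $S\times M_v(S,H)$. Mukai's map
\[
\theta_v\colon v^\perp\to H^2(M_v(S,H),\Z),
\]
given by $\alpha\mapsto [p_{M*}(\mathrm{ch}(\mathcal E)\,\sqrt{\mathrm{td}_S}\,\alpha^\vee)]_1$ up to normalization, is a Hodge isometry onto $H^2$. It is induced by the algebraic class $\mathrm{ch}(\mathcal E)\sqrt{\mathrm{td}_S}$, suitably projected to the relevant Künneth component. Restricting this class to transcendental parts yields an algebraic correspondence inducing $T(S)\cong T(M_v(S,H))$; here $T(S)\subset v^\perp$ because $v$ is algebraic. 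Composing with Step 2 gives a correspondence between $X$ and $S$ inducing a Hodge isometry $T(X)\cong T(S)$.

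\textbf{Step 4: conclusion.} Since $T(S)\otimes\Q\cong T(X)\otimes\Q$ embeds into $U^3\otimes\Q$, Theorem~\ref{om} gives an abelian surface $A$ and a correspondence between $S$ and $A$ inducing a Hodge isometry $T(S)\otimes\Q\cong T(A)\otimes\Q$. Composing the two correspondences produces the desired correspondence between $X$ and $A$.

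\textbf{Main obstacle.} I expect Step 3 to be the hardest part. The difficulty is checking that Mukai's map, as a correspondence, actually restricts to the transcendental lattice isometrically. This needs the normalization of $\mathcal E$: in the twisted or non-fine case one uses a quasi-universal family and divides by its rank, which is harmless rationally. It also requires verifying that the Künneth-projected class remains algebraic. Algebraicity holds because Künneth projectors for $S$ are algebraic, so projecting to $H^2(S)\otimes H^2(M)$ is algebraic. If twisting causes trouble, I would instead use a fine moduli space on a nearby twist, or cite Buskin's result \cite{bus} to absorb any rational Hodge isometry issues.
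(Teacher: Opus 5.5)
Your proof is correct, but it connects $X$ to $S$ by a different mechanism than the paper.

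The paper's route:
\begin{itemize}
\item It builds $S$ by hand, using Nikulin's primitive embedding $T(X)\hookrightarrow\Lambda_{K3}$ and surjectivity of the period map, adapting Morrison's Cor.~1.9.
\item It quotes \cite[Thm.~3.7]{po} to get $X$ birational to $M_v(S,H)$, but never actually uses that birational model.
\item Instead it invokes Markman's theorem \cite[Thm.~1.1]{Markman_rational}, that rational Hodge isometries between projective manifolds of \kntype{} are algebraic. This replaces $X$ by $S^{[n]}$, which is related to $S$ through the Hilbert--Chow morphism and the diagonals in $S^{n+1}$.
\end{itemize}

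Your route:
\begin{itemize}
\item You take $S$ from the $\rho\ge 13$ corollary. This is legitimate, since your bound $\rank T(X)\le 5$ gives $\rho(X)\ge 18$; it is also sharper than the paper's loose ``$\le 6$''.
\item You use the graph closure of $X\dashrightarrow M_v(S,H)$.
\item You realize $T(S)\cong T(M_v(S,H))$ through Mukai's map $\theta_v$, induced by a quasi-universal family.
\end{itemize}

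What each buys: yours is the classical argument. It needs only Mukai--O'Grady--Yoshioka and Huybrechts' result on birational hyperk\"ahler manifolds, produces an explicit cycle, and even realizes the integral isometry $T(S)\cong T(X)$. The paper's route relies on the much deeper Markman theorem. Once that theorem is available, the moduli-space step is actually superfluous: $T(X)_\Q\cong T(S)_\Q$ already gives a rational Hodge isometry $H^2(X,\Q)\cong H^2(S^{[n]},\Q)$ by Witt cancellation.

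Two small remarks on your Step~3, which you flagged as the main obstacle:
\begin{itemize}
\item No K\"unneth projector is needed. $S$ has no odd cohomology, and $\sqrt{\mathrm{td}_S}\cdot\alpha=\alpha$ for $\alpha\in H^2(S)$. So only $\mathrm{ch}_2(\mathcal E)$ contributes to the $H^2(M)$-part, and a rational multiple of $\mathrm{ch}_2(\mathcal E)$ is already the required correspondence.
\item The moduli spaces here are of untwisted sheaves, so a quasi-universal family exists. Your fallback to Buskin is beside the point anyway, since his theorem concerns K3 surfaces only.
\end{itemize}
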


\begin{proof}
  Since $\rank(T(X)) = \dim_\Q T(X)\otimes \Q \leq 6$, we have that
  $T(X)$ has signature $(2,k)$, for $0 \leq k \leq 4$. By Nikulin's
  theorem~\ref{uniqueembedding}, we have a (unique) primitive
  embedding $i: T(X) \hookrightarrow L = \Lambda_{K3} = E_8(-1)^2
  \oplus U^3$. According to Morrison's Corollary 1.9, there is a K3
  surface $S$ with $T(S) \cong T(X)$. In order to show that we can make
  choices such that $T(S) \cong T(X)$ is in fact a Hodge isometry, we
  adapt Morrison's proof (and notation) of Corollary 1.9, as follows.

  The lattice $T(X)$ comes equipped with a Hodge structure $T(X) \otimes
  C \cong T^{2,0} \oplus T^{1,1} \oplus T^{0,2}$, where $T^{2,0} = \C
  \omega$ is $1$-dimensional. Let $S = i(T(X))^\perp$ in $L$. Define
  $L^{2,0} = \C i(\omega)$ and $L^{0,2} = \C i(\bar{\omega})$, and
  $L^{1,1}$ the orthogonal complement of $L^{2,0} \oplus L^{0,2}$ in
  $L \otimes \C$. Let $\Sigma = L^{1,1} \cap (L \otimes \R)$, and $N =
  \Sigma \cap L$. It is easy to see that $N$ and $i(T(X))$ are orthogonal
  complements in $L$, and the quadratic form $q$ on $L$, extended to
  $L \otimes \R$, has signature $(1,19)$ on the subspace $\Sigma$.
  Also, $L \otimes \C = L^{2,0} \oplus L^{1,1} \oplus L^{0,2}$, is a
  Hodge decomposition. Choosing a component of the light cone $\{ x
  \in L^{1,1} \cap (L \otimes R) : q(x) > 0\}$ makes it a signed Hodge
  structure. By the surjectivity of the period map (see for example,
  \cite[Theorem 1.7]{mor}), there is a K3 surface $S$ and a signed
  Hodge isometry $\phi : H^2(S,\Z) \rightarrow L$. It follows that
  $\phi|_{NS(S)}$ gives an isometry of $NS(S)$ with $L^{1,1} \cap L =
  \Sigma \cap L = N$, which has signature $(1,\rho-1)$ for some $\rho
  \leq 20$, and therefore $S$ is algebraic. Finally, $\phi|_{T(S)}$
  gives a Hodge isometry $T(S) \cong i(T(X))$.
  
  By \cite[Theorem 3.7]{po} (essentially proved by Markman in
  \cite{mar}), since $X$ is induced by $T(S)$, we have that $X$ is
  birational to a moduli space $M_v(S,H)$ for some
  primitive\footnote{The theorem as stated does not mention that $v$
  is primitive; however, the proof constructs such a primitive vector
  $v$.} vector $v \in \tilde{H}(S)$ and a $v$-generic polarization
  $H$. (Their remark 3.8 further shows that $X$ is a moduli space of
  semistable sheaves for some Bridgeland stability condition.)

 Next, by \cite[Theorem 3.5]{po} (due to Mukai, O'Grady, and
 Yoshioka), the moduli space $M_v(S,H)$ is deformation equivalent to
 $S^{[n]}$, where $2n = \langle v,v \rangle + 2$.  By \cite[Theorem
   1.1]{Markman_rational}, it is enough to construct a correspondence
 between $Y = S^{[n]}$ and $S$. Now we have the maps $Y
 \xrightarrow{HC} \Sym^{n}(S)$, and $Z = S^n \xrightarrow{\pi}
 \Sym^n(S)$. On $Z \times S = S^{n+1}$ we have the obvious
 correspondence $\Psi = \sum_i \pi_{i,n+1}^*(\Delta)$ where $\Delta
 \subset S \times S$ is the diagonal. It induces a diagonal map on
 $H^2$. So we can push it forward by $\pi$ and pull it back by $HC$,
 the Hilbert-Chow morphism, to get the correspondence from $Y$ to $S$.



  Note that by Oda-Morrison's theorem for K3 surfaces (\ref{om}),
  since $T(S) \otimes \Q \cong T(X) \otimes \Q \hookrightarrow U^3
  \otimes \Q$, we already know there is a correspondence between $S$
  and $A$ inducing a Hodge isometry $T(S) \otimes \Q \cong T(A) \otimes
  \Q$, and composing with the previous correspondence $X \sim S$, we
  prove Oda-Morrison's conjecture for the \hk manifold
  $S^{[n]}$.  By Markman's \cite[Theorem 1.1]{Markman_rational}, since
  we are in the projective case, there exists an algebraic
  correspondence between $X$ of \kntype~ and $Y=S^{[n]}$, and
  therefore between $X$ and $A$, inducing a Hodge isometry $T(X)
  \otimes \Q \cong T(A) \otimes \Q$.

\end{proof}

\begin{lem} \label{lem:embedding-in-h2abelian}
Let $t$ be a quadratic space over $\Q$ of signature $(2,k)$, and let
$u$ be the hyperbolic plane over $\Q$. Then
\begin{enumerate}
\item If $\dim t \leq 3$, (i.e., if $k \leq 1)$, then $t
  \hookrightarrow u^3$.
\item If $k = 2$, then $t \hookrightarrow u^3$ iff $t$ has an
  isotropic vector.
\item If $k = 3$, then $t \hookrightarrow u^3$ iff $t$ has a
  2-dimensional isotropic subspace.
\end{enumerate}
\end{lem}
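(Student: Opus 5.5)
The plan is to use Witt's theorem. A quadratic space $t$ embeds isometrically into $u^3$ if and only if $u^3 \cong t \oplus t'$ for some quadratic space $t'$ of dimension $6-\dim t$. By Witt cancellation, $t'$ is then determined up to isometry. So the question becomes: when does there exist a $t'$ with the right invariants, namely dimension, signature, discriminant and Hasse invariants at every place? The target $u^3$ has dimension $6$, signature $(3,3)$, discriminant $\disc(u^3) = -1$ (class in $\Q^*/\Q^{*2}$), and trivial Hasse invariants, since it is hyperbolic. By Hasse--Minkowski, the existence of $t'$ over $\Q$ with prescribed local data reduces to the standard existence theorem for rational quadratic forms. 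One needs local forms $t'_p$ with $t_p \oplus t'_p \cong u^3_p$ at every $p$, including $\infty$, whose invariants are almost all trivial and satisfy the product formula. The product formula is automatic, because $t'_p$ is determined by $\disc(t')=-\disc(t)$ and by the Hasse invariant forced by $t\oplus t'\cong u^3$.

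An equivalent and cleaner formulation: $t\hookrightarrow u^3$ if and only if $t \oplus (-t) \hookrightarrow u^3 \oplus (-t)$, and $t\oplus(-t)$ is hyperbolic. Hence $t \hookrightarrow u^3$ if and only if $-t \oplus u^3$ has Witt index at least $\dim t$. That is, $t\hookrightarrow u^3$ if and only if $(-t)\oplus u^3 \cong u^{\dim t}\oplus w$ for some $w$ of dimension $6-\dim t$. Since $u^3$ is hyperbolic, $-t\oplus u^3$ is Witt-equivalent to $-t$. So the condition becomes: the anisotropic kernel of $t$ has dimension at most $6-\dim t$, i.e. the Witt index $i(t)$ satisfies $\dim t - 2i(t) \le 6-\dim t$. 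Equivalently,
\[
i(t) \geq \dim t - 3 .
\]
I will verify this criterion carefully. The total dimension is $6+\dim t$, the needed hyperbolic part has dimension $2\dim t$, and the remaining part has dimension $6-\dim t$. Conversely, if $(-t)\oplus u^3 \cong t\oplus(-t)\oplus w$, then Witt cancellation of $-t$ gives $u^3\cong t\oplus w$.

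The three cases then follow directly.
\begin{enumerate}
\item If $\dim t\le 3$, the requirement $i(t)\ge \dim t-3$ is vacuous.
\item If $k=2$, so $\dim t=4$, we need $i(t)\ge 1$, i.e. $t$ has an isotropic vector.
\item If $k=3$, so $\dim t=5$, we need $i(t)\ge 2$, i.e. $t$ has a $2$-dimensional totally isotropic subspace.
\end{enumerate}
Signature plays no role beyond ensuring these statements are non-vacuous. For instance, a form of signature $(2,3)$ has real Witt index $2$, so the condition is a genuine arithmetic one at finite primes.

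The main point to check is the Witt-equivalence step: the anisotropic kernel of $(-t)\oplus u^3$ equals that of $-t$. This holds because adding a hyperbolic space does not change the Witt class. With that, the dimension count above is exact, and no local--global argument is actually needed. I will also state explicitly that Witt cancellation holds over $\Q$ for nondegenerate forms, since characteristic $\neq 2$.
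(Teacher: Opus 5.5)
Your argument is correct, but it takes a different, more structural route than the paper.

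\emph{The paper's route.} The paper argues case by case:
\begin{enumerate}
\item For $\dim t\le 3$ it diagonalizes $t=\langle a,b,c\rangle$ and places each line in its own copy of $u$, using that $u$ represents every rational number.
\item For $k=2,3$ it splits off one or two hyperbolic planes spanned by the given isotropic vectors, and reduces to the first case.
\item For the converses it embeds $t^\perp$ (of dimension $2$ or $1$) into $u^2$ or $u$ by the first part, and then invokes Witt cancellation.
\end{enumerate}

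\emph{Your route.} You use the Witt decomposition $t\cong u^{i(t)}\oplus t_{\mathrm{an}}$, uniqueness of the anisotropic kernel, and hyperbolicity of $t\oplus(-t)$. From these you prove the single criterion $t\hookrightarrow u^3 \iff i(t)\ge \dim t-3$, and all three parts follow at once.

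\emph{What each buys.} Your approach gives uniformity and generality:
\begin{itemize}
\item The same computation gives $t\hookrightarrow u^m\iff i(t)\ge\dim t-m$ over any field of characteristic $\neq 2$.
\item It makes plain that the signature hypothesis plays no role.
\item It handles the converse directions cleanly. The paper's step ``$u^3=t\oplus t^\perp\hookrightarrow t\oplus u^2$, so $u\hookrightarrow t$'' tacitly requires adding a complement of $t^\perp$ in $u^2$ to both sides before cancelling. Your Witt-class bookkeeping does this automatically.
\end{itemize}
The paper's approach is more elementary and explicitly constructive. It needs only the universality of $u$ and cancellation, not the full Witt decomposition with uniqueness of anisotropic kernels.

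Your opening paragraph on Hasse--Minkowski and local invariants is never used, and you should drop it. Its claim that $u^3$ has trivial Hasse invariants also depends on the convention. With the standard $\varepsilon_p=\prod_{i<j}(a_i,a_j)_p$ and $u^3\cong\langle 1,-1\rangle^3$, one gets $\varepsilon_p=(-1,-1)_p$, which is nontrivial at $p=2$ and $p=\infty$.
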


\begin{rmk}
In the lemma, $k=4$ is impossible due to signature reasons. Also, the
proof of the lemma is analogous to that of \cite[Cor 2.6]{mor}
\end{rmk}
\begin{proof}
The first part follows from the fact that $u$ represents every
rational number. In the case when $\dim t = 3$, we can diagonalize $t
= \langle a,b,c \rangle$, we can find vectors $\alpha, \beta, \gamma$ in each
of the three orthogonal copies of $u$ with $\langle \alpha, \alpha
\rangle = a$ etc. The case of $\dim t = 2$ is even simpler; one can
embed $t$ in $u^2$, in fact.

For the second part, first note that if $t$ has an isotropic vector
$\alpha$, then we can find $\beta \in t$ such that $\langle \alpha,
\beta \rangle = 1$; then $\Q \alpha + \Q \beta$ is a hyperbolic
plane. Therefore, $t = u \oplus t'$ for some $t'$ of signature
$(1,1)$, which embeds into $u^2$ by the proof of part (1). Therefore
$t \hookrightarrow u^3$. Conversely, if $t \hookrightarrow u^3$, then
$t^\perp$ has signature $(1,1)$ and embeds in $u^2$ by part (1). Then
$u^3 = t \oplus t^\perp \hookrightarrow t \oplus u^2$, and so $u
\hookrightarrow t$ by Witt cancellation.

Finally, if $t$ has dimension $5$, and $t$ has an isotropic subspace
of dimension $2$ generated by $\alpha, \alpha'$, then as in the
previous paragraph, we can find $\beta, \beta'$ such that these four
vectors span $u^2$. Therefore, $t = u^2 \oplus t'$ for some
1-dimensional $t'$, which embeds in $u$. Therefore $t \hookrightarrow
u^3$. Conversely, if $t \hookrightarrow u^3$, then $t^\perp$ is
1-dimensional and embeds in $u$. Therefore, $u^3 = t \oplus t'
\hookrightarrow t \oplus u$, and then $u^2 \hookrightarrow t$ by Witt
cancellation.

\end{proof}

We obtain the following corollary by combining
Theorem~\ref{thm:oda-morrison-HK} and
Lemma~\ref{lem:embedding-in-h2abelian}.
\begin{cor}
Let $X$ by a projective \hk manifold of \kntype. Then if any of the
following conditions are met, there is an abelian variety $A$ and a
correspondence between $X$ and $A$ inducing a Hodge isometry $T(X)
\otimes \Q \cong T(A) \otimes \Q$.
\begin{itemize}
    \item $\rho(X) \in \{ 20, 21 \}$.
    \item $\rho(X) = 19$ and $T(X)$ has an isotropic vector.
    \item $\rho(X) = 18$ and $T(X)$ has a 2-dimensional isotropic
      sublattice (i.e., $\Z x + \Z y$ such that $x^2 = y^2 = x\cdot y
      =0$).
\end{itemize}
\end{cor}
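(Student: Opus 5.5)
The plan is to reduce the corollary to Theorem~\ref{thm:oda-morrison-HK}: in each of the three cases I will show that $t = T(X)\otimes\Q$ admits an embedding into $u^3 = U^3\otimes\Q$, and the theorem then supplies the abelian variety and the correspondence.

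First I determine the rank and signature of $T(X)$. For $X$ of \kntype, $H^2(X,\Z)$ with the Beauville--Bogomolov--Fujiki form is the lattice $\Lambda_{K3}\oplus\langle -2(n-1)\rangle$. This lattice has rank $23$ and signature $(3,20)$. Since $X$ is projective, $NS(X)$ is nondegenerate of signature $(1,\rho-1)$. Hence $T(X)=NS(X)^\perp$ is nondegenerate of rank $23-\rho$ and signature $(2,21-\rho)$. So $t$ is a rational quadratic space of signature $(2,k)$ with $k = 21-\rho$.

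Next I split into cases by Picard number. If $\rho\in\{20,21\}$, then $k\le 1$ and $\dim t\le 3$, so Lemma~\ref{lem:embedding-in-h2abelian}(1) gives $t\hookrightarrow u^3$. If $\rho=19$, then $k=2$. An isotropic vector of $T(X)$ is in particular an isotropic vector of $t$, so Lemma~\ref{lem:embedding-in-h2abelian}(2) applies. If $\rho=18$, then $k=3$. An isotropic sublattice $\Z x+\Z y$ of rank $2$ tensors to a $2$-dimensional isotropic subspace $\Q x+\Q y\subset t$, because $x,y$ are linearly independent. Then Lemma~\ref{lem:embedding-in-h2abelian}(3) gives the embedding.

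In every case the hypothesis of Theorem~\ref{thm:oda-morrison-HK} holds, and the conclusion follows directly. The only step needing care is the signature bookkeeping. In particular I must use projectivity so that $NS(X)$ is hyperbolic and hence $T(X)$ is nondegenerate of signature exactly $(2,21-\rho)$; with that in place, the cases of the lemma match the listed Picard numbers. I expect no further obstacle.
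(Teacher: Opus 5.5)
Your proposal is correct and matches the paper's argument, which obtains the corollary by combining Theorem~\ref{thm:oda-morrison-HK} with Lemma~\ref{lem:embedding-in-h2abelian}. Your computation that $T(X)$ is nondegenerate of signature $(2,21-\rho)$ spells out what the paper leaves implicit. That computation uses projectivity and $H^2(X,\Z)\cong\Lambda_{K3}\oplus\langle -2(n-1)\rangle$ for $n\ge 2$, and it is what matches the three Picard-number cases to the three parts of the lemma.
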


\hfill

\begin{rmk}
To summarize, now we know how to handle the $K3^{[n]}$ type case: for
large Picard numbers, $X$ is precisely the Mukai moduli space $M_v(S)$
for a unique $K3$ surface $S$ with a large Picard number. By
Morrison's results, there is a correspondence between $S$ and an
abelian surface, which in turn corresponds to an abelian variety.  The
other known cases of of hyperk\"ahler manifolds can be treated
similarly.  The generalized Kummer varieties $Kum_n$ already arise
from a Kummer-type construction due to Beauville's classical
construction \cite{Beauville}.  The exceptional O'Grady examples OG10
and OG6 are related (up to an index 2) to a $K3^{[5]}$-type and a
$K3^{[3]}$-type hyperk\"ahler manifold, respectively. The example OG6
is a quotient of $K3^{[3]}$ by a birational involution
(\cite{MRS}). For the exceptional example OG10, if the transcendental
lattice $T(X)$ contains a copy of the hyperbolic lattice $U$, then $X$
is of LSV-type (\cite{LSV}), and then it is in correspondence with a
cubic fourfold $Z$, and in turn, for large $NS(Z)$ one obtains a
correspondence to a $K3$ surface (as noticed in Laza's paper
\cite{L-max}).
\end{rmk}

\appendix
\section{Some lattice theory}

We briefly recall some background on lattices.

\begin{defn}
  A {\em lattice} will denote a finitely generated free abelian group
  $\Lambda$ equipped with a symmetric bilinear form $B: \Lambda \times
  \Lambda \rightarrow \Z$.
\end{defn}
We abbreviate the data $(\Lambda, B)$ to $\Lambda$ sometimes, when the
form is understood, and we interchangeably write $u\cdot v = \langle
u, v \rangle = B(u,v)$ and $u^2 = \langle u,u \rangle = B(u,u)$ for $u
\in \Lambda$.

The {\em signature} of the lattice is the real signature of the form
$B$, written $(r_+, r_-, r_0)$ where $r_+$, $r_-$ and $r_0$ are the
number of positive, negative and zero eigenvalues of $B$, counted with
multiplicity. We say that the lattice is {\em non-degenerate} if the
form $B$ has zero kernel, i.e. $r_0 = 0$. In that case, the signature
is abbreviated to $(r_+, r_-)$. We say $\Lambda$ is {\em even} if $x^2
\in 2\Z$ for all $x \in \Lambda$.

Let $\Lambda$ be a non-degenerate lattice. The {\em discriminant} of
the lattice is $\disc(\Lambda) := |\det(B)|$. The lattice is said to
be {\em unimodular} if its discriminant is $1$. From now on, we
restrict attention to non-degenerate lattices. We may think of
$\Lambda$ as embedded in the real or p-adic space $\Lambda \otimes \R$
or $\Lambda \otimes \Q_p$, and extend the bilinear form to the ambient
space.

\begin{defn}
  The {\em dual lattice} of $\Lambda$ is $\Lambda^* =
  \textrm{Hom}(\Lambda, \Z)$.  We can identify $\Lambda^*$ with a
  subgroup of $\Lambda_\Q = \Lambda \otimes \Q$, using the bilinear
  form $B$ to identify $\Lambda_\Q$ with its $\Q$-dual. It is easy to
  show that $\disc(\Lambda) = [ \Lambda^* : \Lambda ]$.
\end{defn}

\begin{defn}
  The finite abelian group $\Lambda^*/\Lambda$ is called the {\em
    discriminant group} $A_{\Lambda}$.
\end{defn}

Let $\Lambda$ be an even non-degenerate lattice.
\begin{defn} A stronger invariant of the
lattice is its {\em discriminant form} $q_{\Lambda}$, which is defined
as follows. The form $B$ on $\Lambda$ induces a $\Q$-valued bilinear
symmetric form on $\Lambda^*$, and consequently also induces a
quadratic form on $A_{\Lambda}$ as follows. The induced form
$\Lambda^* \times \Lambda^* \rightarrow \Q$ takes $\Lambda^* \times
\Lambda$ into $\Z$, and the diagonal of $\Lambda \times \Lambda$ to
$2\Z$. Therefore we get an induced symmetric form $b: A_{\Lambda}
\times A_{\Lambda} \rightarrow \Q/\Z$ and a quadratic form $q:
A_{\Lambda} \rightarrow \Q/2\Z$ such that for all $n \in \Z$ and all
$a,b \in \Lambda$, we have
$$
q(na) = n^2q(a)
$$
$$
q(a+a') -q(a)-q(a') \equiv 2b(a,a') \mod 2\Z
$$
This data $(A_{\Lambda}, b,q)$ will be abbreviated to $q_{\Lambda}$.
\end{defn}

Note that the discriminant of the lattice is just the size of the
discriminant group. We shall let $l(A)$ denote the minimum number of
generators of an abelian group $A$. Note that $l(A_{\Lambda}) \leq
\textrm{ rank}(\Lambda^*) = \textrm{ rank}(\Lambda)$. For a unimodular
lattice $\Lambda$, we have $l(A_{\Lambda}) = 0$.

The discriminant form of a unimodular lattice is trivial, and if $M
\subset L$ is a {\em primitive} embedding of non-degenerate even
lattices (that is, $L/M$ is a free abelian group), with $L$
unimodular, then we have
$$
q_{M^{\perp}} = -q_M
$$

For a lattice $\Lambda$ and a real number $\alpha$, we denote by
$\Lambda(\alpha)$ the lattice which has the same underlying group but
with the bilinear form scaled by $\alpha$. The lattice $\Z\langle
\alpha \rangle$ of rank one with a generator of norm $\alpha$ will be
abbreviated $\langle \alpha \rangle$.

A {\em root} of a positive definite lattice, we will mean an element
$x$ such that $x^2 = 2$, whereas for a negative-definite or indefinite
lattice, we will mean an element $x$ such that $x^2 = -2$.  A {\em
  root lattice} is a lattice that is spanned by its roots.

The familiar root lattices (positive definite by convention) are
listed below:

\begin{itemize}
\item $A_n$ : This is the $n$-dimensional simplex lattice $\{x \in
  \Z^{n+1} \big| \sum x_i = 0 \}$ inside the sum-zero hyperplane in
  $R^{n+1}$. It has $n(n+1)$ roots, and discriminant $n+1$.
  
\item $D_n$ : The $n$-dimensional checkerboard lattice is $\{x \in
  \Z^n \big| \sum\limits_{i=1}^n x_i \equiv 0 \mod 2\}$. It has
  $2n(n-1)$ roots, and discriminant $4$.

\item $E_8$ : This is an "exceptional root lattice"; one realization
  of $E_8$ is as the span of $D_8$ and the all-halves vector
  $(1/2,\ldots,1/2)$. It has $240$ roots, and is unimodular.

\item $E_7$ : taking the orthogonal complement of any root in $E_8$
  gives us the root lattice $E_7$. It has $126$ roots, and
  discriminant $2$.

\item $E_6$ : taking the orthogonal complement of any $A_2$ sublattice
  of $E_8$ (i.e., spanned by two roots $e_1,e_2$ such that $e_1 \cdot
  e_2 = -1$), gives us $E_6$. It has $72$ roots, and discriminant $3$.

\end{itemize}

The ADE classification theorem of root lattices states that any
positive definite root lattice can be decomposed into an orthogonal
direct sum of the root lattices $A_n, D_n, E_{\{6,7,8\}}$ listed
above.

Let $U$ be the {\em hyperbolic plane}, i.e. the indefinite rank $2$
lattice whose matrix is
\[ \left(\begin{array}{cc}
0 & 1 \\
1 & 0 
\end{array}\right) \]
Note that $U(-1) \cong U$.

Positive definite even unimodular lattices only exist in dimensions
which are multiples of $n$, and their number grows at least doubly
exponentially in the dimension. (More generally, the
Smith-Minkowski-Siegel mass formula can be used to enumerate or
estimate the number of lattices of positive definite lattices of a
given dimension with specified local character at all primes, i.e., in
a genus.)

In contrast, the structurex of indefinite unimodular lattices is quite
simple to describe.

\begin{thm} \label{Milnor} {\em (Milnor \cite{mil})}
Let $\Lambda$ be an indefinite unimodular lattice. If $\Lambda$ is
even, then $\Lambda \cong E_8(\pm 1)^m \oplus U^n$ for some $m$ and
$n$. If $\Lambda$ is odd, then $\Lambda \cong \langle 1 \rangle ^m
\oplus \langle -1 \rangle^n$ for some $m$ and $n$.
\end{thm}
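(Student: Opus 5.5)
This is Milnor's classification of indefinite unimodular lattices; I would prove it by peeling off hyperbolic planes. The argument has three steps: find an isotropic vector, split off a unimodular plane, and then normalize what remains.

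\textbf{Step 1: an isotropic vector exists.} Let $\Lambda$ be indefinite and unimodular of rank $r$. I claim there is a nonzero $x \in \Lambda$ with $x^2 = 0$.
\begin{itemize}
\item If $r \geq 5$, the rational quadratic form $\Lambda_\Q$ is indefinite, so the Hasse--Minkowski theorem (Meyer's theorem) gives a rational isotropic vector. Clearing denominators gives an integral one.
\item If $r \leq 4$, I would use a direct argument. Unimodularity forces local conditions at every prime $p$: the discriminant is $\pm 1$ and the form is $p$-adically unimodular. Together with indefiniteness, the Hasse invariants then allow a rational zero. For example, in rank $2$ the discriminant is $-1$, which is minus a square, so the form is isotropic over $\Q$. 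Ranks $3$ and $4$ are the delicate part.
\end{itemize}
\textbf{This step is the main obstacle.} The cleanest route is to invoke Meyer's theorem together with the computation that a unimodular $\Z_p$-lattice of rank $\geq 3$ is isotropic for every odd $p$, and to handle $p=2$ and the small ranks by direct inspection.

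\textbf{Step 2: split off a unimodular plane.} Take $x$ isotropic and primitive. By unimodularity there is $y \in \Lambda$ with $x\cdot y = 1$. Then $P = \Z x + \Z y$ has Gram matrix
\[
\begin{pmatrix} 0 & 1 \\ 1 & y^2 \end{pmatrix},
\]
which has determinant $-1$, so $P$ is unimodular and $\Lambda = P \oplus P^\perp$.
\begin{itemize}
\item If $y^2$ is even, replace $y$ by $y - (y^2/2)\,x$. Then $P \cong U$.
\item If $y^2$ is odd, replace $y$ by $y - \frac{y^2-1}{2}\,x$, so that $y^2 = 1$. Then $P$ has basis $y$ and $x - y$, giving $P \cong \langle 1 \rangle \oplus \langle -1 \rangle$.
\end{itemize}
In either case $P$ is indefinite.

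\textbf{Step 3: normalize the complement.} The complement $M = P^\perp$ is unimodular.

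\emph{Odd case.} If $\Lambda$ is odd, I first arrange $P \cong \langle 1\rangle\oplus\langle -1\rangle$. If $P \cong U$ but $M$ is odd, the identity $U \oplus \langle \pm1\rangle \cong \langle 1\rangle\oplus\langle -1\rangle\oplus\langle \pm1\rangle$ converts the plane. Indeed, writing $U = \Z e + \Z f$ and $\langle 1 \rangle = \Z z$, the vectors $e+z$ and $e+f+z$ have squares $1$ and $3$, and one checks a diagonal basis directly. Once $\Lambda = \langle 1\rangle\oplus\langle -1\rangle\oplus M$, the lattice $\langle 1\rangle \oplus M$ or $\langle -1\rangle\oplus M$ is indefinite and odd, so induction on rank applies to it. The definite case is reduced the same way: an indefinite odd lattice $\langle 1\rangle\oplus\langle -1\rangle\oplus M$ with $M$ definite is diagonalized by showing that $\langle 1\rangle\oplus\langle -1\rangle\oplus M$ contains vectors of square $\pm1$ spanning a complement. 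This is again done by repeatedly applying Steps 1--2 to suitable indefinite sublattices.

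\emph{Even case.} Here $\Lambda = U \oplus M$ with $M$ even unimodular. If $M$ is indefinite, recurse on $M$. If $M$ is definite, write $\sigma = \operatorname{sign}(M)$ for its signature. The van der Blij/Milgram congruence gives $\sigma \equiv 0 \pmod 8$. Now compare $U\oplus M$ with $U \oplus E_8(\pm1)^{|\sigma|/8}$: both are indefinite, even, unimodular, of the same rank and signature. Their isomorphism follows by a final Step-1/Step-2 argument applied to $U \oplus M \oplus E_8(\mp1)^{|\sigma|/8}$, using the identity $E_8 \oplus E_8(-1) \cong U^8$. That identity itself follows from the odd case, because $E_8\oplus E_8(-1)$ and $U^8$ are both characterized as even lattices inside $\langle1\rangle^8\oplus\langle-1\rangle^8$.

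After Step 1, the remainder is bookkeeping with rank and signature.
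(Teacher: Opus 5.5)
The paper does not prove this statement; it cites Milnor. I am therefore judging your outline on its own merits. Steps 1--2 and the induction in the odd case follow the standard route (as in Serre's \emph{A Course in Arithmetic}). The even case with definite $M$, however, has a genuine gap, and that is exactly where the content of the theorem lies.

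Iterating Steps 1--2 on the signature-zero lattice $U\oplus M\oplus E_8(\mp1)^p$ does show it is $\cong U^{1+8p}$. The identity $E_8\oplus E_8(-1)\cong U^8$ gives the same for $U\oplus E_8(\pm1)^p\oplus E_8(\mp1)^p$. All this yields is the stable isomorphism
\[
U\oplus M\oplus E_8(\mp1)^p\;\cong\;U\oplus E_8(\pm1)^p\oplus E_8(\mp1)^p .
\]
To reach $U\oplus M\cong U\oplus E_8(\pm1)^p$ you must cancel $E_8(\mp1)^p$. Cancellation fails for $\Z$-lattices in general: $U\oplus E_8^2\cong U\oplus D_{16}^+$, where $D_{16}^+$ is the span of $D_{16}$ and the all-halves vector, yet $E_8^2\not\cong D_{16}^+$. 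Justifying cancellation here amounts to uniqueness of the primitive embedding $E_8(\mp1)^p\hookrightarrow U^{1+8p}$ up to $O(U^{1+8p})$, which is no easier than the theorem. So the claim that ``the remainder is bookkeeping'' is not right.

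Your remark about even lattices inside $\langle1\rangle^8\oplus\langle-1\rangle^8$ points the right way, but the relevant objects are overlattices of the even sublattice, and the argument must be run for $\Lambda$ itself. Here is the repair.
\begin{enumerate}
\item Write $\Lambda=U\oplus E_1$ and set $F=\Z a\oplus\Z b\oplus E_1$ with $a^2=1$, $b^2=-1$. Then $F$ is odd, indefinite and unimodular, with the same signature $(m,n)$ as $\Lambda$. By your odd case, $F\cong\langle1\rangle^m\oplus\langle-1\rangle^n$.
\item The even sublattice $F_0=\{z\in F: z^2\in2\Z\}$ is intrinsic to $F$. It equals $(\Z(a+b)+\Z(a-b))\oplus E_1\cong U(2)\oplus E_1$.
\item The lattice $\Lambda\cong\Z\tfrac{a+b}{2}\oplus\Z(a-b)\oplus E_1$ is one of exactly two even unimodular lattices between $F_0$ and $F_0^*$.
\item In the standard model, take basis $e_i$ ($e_i^2=1$), $f_j$ ($f_j^2=-1$) and $w=\sum e_i+\sum f_j$. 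The two even unimodular overlattices of the even sublattice are obtained by adjoining $w/2$ or $w/2+e_1$. The isometry $e_1\mapsto -e_1$ exchanges them.
\item Hence $\Lambda$ depends only on $(m,n)$, and $E_8(\pm1)^p\oplus U^q$ is one such lattice.
\end{enumerate}
An alternative repair is to prove cancellation of $U$ from indefinite even unimodular lattices via Eichler transvections.

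There are two smaller points to fix. First, in the odd case the identity $U\oplus\langle\pm1\rangle\cong\langle1\rangle\oplus\langle-1\rangle\oplus\langle\pm1\rangle$ needs a vector of norm $\pm1$ in $M$. A definite odd unimodular $M$ may have none; for example $D_{12}^+(-1)$, defined like $D_{16}^+$, has no such vectors. Instead, replace $y$ by $y+z$ with $z\in M$ of odd norm, so that $\Z x+\Z(y+z)\cong\langle1\rangle\oplus\langle-1\rangle$. Your induction then already covers definite $M$.

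Second, in Step 1 the prime $p=2$ cannot be settled by local inspection: $\langle1\rangle^3$ is $\Z_2$-unimodular but anisotropic over $\Q_2$. The $2$-adic condition must come from Hilbert reciprocity, since the Hasse-invariant criterion holds at $\infty$ by indefiniteness and at every odd $p$.
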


\begin{thm} \label{uniquelattice} {\em (Kneser \cite{kn}, Nikulin \cite{nik})}
Let $L$ be an even lattice with signature $(s_+, s_-)$ and
discriminant form $q_L$ such that
\begin{enumerate}
\item $s_+ > 0$
\item $s_- > 0$
\item $l(A_L) \leq \textrm{rank}(L) -2$.
\end{enumerate}
Then $L$ is the unique lattice with that signature and discriminant
form, up to isometry.
\end{thm}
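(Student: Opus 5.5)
The statement to be proved is the Kneser--Nikulin uniqueness theorem (Theorem~\ref{uniquelattice}). The strategy is local--global: first show that the signature and discriminant form fix the genus of $L$, then show that the genus has only one class under the hypotheses $s_\pm>0$ and $l(A_L)\le \rank(L)-2$.

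\textbf{Step 1: the genus is determined.} By Nikulin's correspondence, for each prime $p$ the $p$-adic lattice $L\otimes\Z_p$ is determined up to isometry by its rank and the $p$-part $q_{L,p}$ of the discriminant form. For odd $p$ this follows from the Jordan decomposition. For $p=2$ it requires the finer analysis of $2$-adic Jordan components. Evenness is essential here, because the discriminant quadratic form (valued in $\Q/2\Z$) then records the $2$-adic forms $u_k, v_k, \langle 2^k\epsilon\rangle$. Together with the signature, this shows that any two lattices $L, L'$ with the same invariants are locally isometric at every place, so they lie in the same genus.

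\textbf{Step 2: reduce the number of classes to spinor genera.} Since $L$ is indefinite of rank $\ge 3$, strong approximation for the spin group (Eichler, Kneser) gives that the genus equals the spinor genus only up to a finite group. The number of spinor genera in the genus is the index of
\[
\Q^{\times}_{>0}\prod_p \theta\bigl(O(L\otimes\Z_p)\bigr)
\]
in the idelic group $\mathbb{A}^\times$. Rank $\ge 3$ holds because $l(A_L)\ge 0$ forces $\rank(L)\ge 2$, and the rank-$2$ case, with $l(A_L)=0$, is unimodular and hence isometric to $U$ by Milnor's theorem~\ref{Milnor}.

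\textbf{Step 3: local spinor norm surjectivity.} This is the main obstacle. It suffices to show that for every prime $p$ the spinor norm
\[
\theta\colon O(L\otimes\Z_p)\to \Q_p^\times/(\Q_p^\times)^2
\]
contains $\Z_p^\times(\Q_p^\times)^2$. Once that holds, class number one follows from the strong approximation of Step 2. The hypothesis $l(A_L)\le \rank(L)-2$ says that, at each $p$, the $p$-adic lattice has at least $\rank(L)-l(A_{L,p})\ge 2$ unimodular Jordan summands. So $L\otimes\Z_p$ splits off a unimodular rank-$2$ piece: for odd $p$ this is a binary form $\langle \epsilon_1,\epsilon_2\rangle$, and for $p=2$ it is an even unimodular plane $u$ or $v$.

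In that unimodular plane, reflections in vectors of unit norms realize all unit spinor norms. For odd $p$ this works because a binary unimodular form represents every unit. For $p=2$ I would check directly that $u$ and $v$ contain vectors whose norms $2\eta$ cover all unit classes $\eta$ mod squares; this $2$-adic verification is the delicate point. Reflections extend by the identity on the complement, so the local spinor norms contain all units.

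Finally, since $\Q$ has class number one and $L$ is indefinite (so the sign at the real place imposes no constraint), the global index is $1$. This gives one spinor genus, hence one class, and $L$ is unique up to isometry.
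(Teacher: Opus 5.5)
The paper gives no proof of this statement (it only cites Kneser and Nikulin). Your outline is exactly the standard argument behind Nikulin's Corollary~1.13.3, so it is correct and follows the cited route: the genus is fixed by the signature and $q_L$; Eichler's theorem makes each spinor genus of an indefinite lattice of rank $\ge 3$ a single class; and the local spinor norms contain $\Z_p^\times$ because the unimodular Jordan component of $L\otimes\Z_p$ (even when $p=2$, since $L$ is even) has rank $\rank(L)-l(A_{L,p})\ge 2$. Two small repairs are needed: in Step 1, $L\otimes\Z_p$ is \emph{not} determined by its rank and $q_{L,p}$ alone (e.g.\ $\langle 1,1\rangle$ and $\langle 1,2\rangle$ over $\Z_3$, or $u\oplus u$ and $u\oplus v$ over $\Z_2$), so you must also use $\det(L\otimes\Z_p)\equiv\det L=(-1)^{s_-}|A_L|$, which the signature and $q_L$ supply; and the count of spinor genera involves $\theta(O^+(L\otimes\Z_p))$, so you should realize each unit $\epsilon$ by a product $\tau_{w_1}\tau_{w_2}$ of two reflections whose norms differ by the factor $\epsilon$, rather than by a single reflection.
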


\begin{prop}
Let $M$ be a unimodular lattice. Then if $M \subset L$, we have an
orthogonal decomposition $L = M \oplus M^{\perp}$.
\end{prop}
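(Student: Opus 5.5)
The plan is to show that $M^{\perp}$ is a complement of $M$ in $L$, meaning $M \cap M^{\perp} = 0$ and $M + M^{\perp} = L$. Orthogonality of the two summands is automatic from the definition of $M^{\perp}$. Here $M$ is understood to be non-degenerate, as the appendix assumes throughout; unimodularity means $\det B|_M = \pm 1$.

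The first step is to construct an orthogonal projection $L \to M$. Fix $x \in L$. The functional $f_x \colon M \to \Z$, $m \mapsto B(x,m)$, is integer-valued because $B$ is $\Z$-valued on $L$, so $f_x \in \mathrm{Hom}(M,\Z) = M^*$. Since $M$ is unimodular, $\disc(M) = [M^* : M] = 1$, so the map $M \to M^*$, $m \mapsto B(m,\cdot)$, is an isomorphism. Hence there is a unique $p(x) \in M$ with $B(p(x), m) = B(x,m)$ for all $m \in M$. This surjectivity of $M \to M^*$ is the one place unimodularity enters, and it is the key step: over $\Z$ one needs the determinant to be a unit, not merely nonzero, and this is exactly what the definition gives.

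The second step is the decomposition. Write $x = p(x) + (x - p(x))$. By construction $B(x - p(x), m) = 0$ for all $m \in M$, so $x - p(x) \in M^{\perp}$, which gives $L = M + M^{\perp}$. For the intersection, if $y \in M \cap M^{\perp}$ then $B(y,\cdot)$ vanishes on $M$. Non-degeneracy of $M$ then forces $y = 0$. Therefore $L = M \oplus M^{\perp}$ as an orthogonal direct sum.

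There is no real obstacle. The only care needed is to note that $L$ itself need not be non-degenerate or unimodular: only the integrality of $B$ on $L$ and the unimodularity of $M$ are used. As a byproduct, this shows that any unimodular sublattice is automatically primitive.
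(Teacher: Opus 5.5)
Your proof is correct. The paper states this proposition in the appendix without proof, treating it as standard, and your argument is the standard justification. You use unimodularity to identify $M$ with $M^* = \mathrm{Hom}(M,\Z)$, which defines the orthogonal projection $x \mapsto p(x)$ with $x - p(x) \in M^{\perp}$, and non-degeneracy of $M$ gives $M \cap M^{\perp} = 0$. Your side remarks also hold: only the integrality of the form on $L$ is used, and the splitting gives $L/M \cong M^{\perp}$, so a unimodular sublattice is automatically primitive.
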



\begin{thm} \label{uniqueembedding} {\em (Nikulin \cite{nik} Theorem 1.14.4)}
Let $M$ be an even lattice with invariants $(t_+, t_-, q_M)$ and let
$L$ be an even unimodular lattice of signature $(s_+, s_-)$. Suppose
that
\begin{enumerate}
\item $t_+ < s_+$.
\item $t_- < s_-$.
\item $l(A_M) \leq \textrm{rank}(L) - \textrm{rank}(M) - 2$. 
\end{enumerate}
Then there exists a unique primitive embedding of $M$ into $L$, up to
automorphisms of $L$.
\end{thm}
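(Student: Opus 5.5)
Theorem~\ref{uniqueembedding} is stated in the appendix as a result of Nikulin and is cited, not proved; so what I sketch here is how its proof goes, in two parts. The first part is existence, built from discriminant forms. The second is uniqueness, built from the lifting of isometries of discriminant groups.

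\emph{Existence.} Primitive embeddings $M \hookrightarrow L$ into an even unimodular $L$ correspond to pairs $(K,\gamma)$, where $K$ is an even lattice and $\gamma : q_K \cong -q_M$ is an anti-isometry of discriminant forms. Given such a pair, $L$ is recovered as the overlattice of $M \oplus K$ determined by the graph of $\gamma$ inside $A_M \oplus A_K$. The graph is isotropic precisely because $\gamma$ is an anti-isometry, and the resulting overlattice is unimodular. So the first step is to produce an even lattice $K$ of signature $(s_+ - t_+, s_- - t_-)$ with $q_K \cong -q_M$. By Nikulin's existence theorem for lattices with prescribed signature and discriminant form, such a $K$ exists when $\rank K > l(A_M)$, together with a signature congruence mod $8$. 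The rank condition is guaranteed by hypothesis (3), which gives $l \le \rank K - 2$. The congruence is automatic: the signatures of $L$ and of $M$ both satisfy Milnor's congruence relation, since $L$ is even unimodular (Theorem~\ref{Milnor}). Gluing $M$ and $K$ along $\gamma$ then gives an even unimodular lattice of signature $(s_+, s_-)$. Because $t_\pm < s_\pm$, this lattice is indefinite, so by Theorem~\ref{Milnor} it is isometric to $L$.

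\emph{Uniqueness.} Take two primitive embeddings. Their orthogonal complements $K_1, K_2$ are even and indefinite, since both signature components are positive by (1) and (2). They have the same signature and the same discriminant form $-q_M$, and $l(A_{K_i}) = l(A_M) \le \rank K_i - 2$. Theorem~\ref{uniquelattice} therefore gives an isometry $K_1 \cong K_2$. The embedding data consist of the glue maps $\gamma_i : A_M \to A_{K_i}$. The two embeddings are equivalent under $\mathrm{Aut}(L)$ precisely when there exist $g \in O(K)$ and $h \in O(M)$ whose induced actions are compatible with the $\gamma_i$. Taking $h = \mathrm{id}$, this is achieved if the isometry $\bar g = \gamma_2 \gamma_1^{-1}$ of $q_K$ lifts to an isometry of $K$. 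The lifted map on $M \oplus K$ then preserves the glue and extends to $L$.

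\emph{Main obstacle.} The key step is the surjectivity of $O(K) \to O(q_K)$ for an indefinite even $K$ with $l(A_K) \le \rank K - 2$ (Nikulin, Theorem 1.14.2). To prove it, I would first split off a hyperbolic plane. Under the length condition, Nikulin's theory shows $K \cong U \oplus K'$. One then lifts a given $\bar g$ by working locally at each prime $p$: there the surjectivity of $O(K_p) \to O(q_{K_p})$ follows from explicit Jordan decompositions. The local lifts are then glued into a global isometry via strong approximation for the spin group. This requires indefiniteness and rank at least $3$, both of which the hypotheses guarantee. The $U$ summand supplies enough reflections and Eichler transformations to adjust spinor norms, which is the delicate point at $p = 2$.
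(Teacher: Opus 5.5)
Your reduction is correct and is the standard one. The paper gives no proof of this statement and only quotes Nikulin, who deduces it exactly as you do:
\begin{itemize}
\item existence of $K$ from his Cor.~1.10.2, followed by gluing and Milnor's classification;
\item uniqueness of $K$ from Theorem~\ref{uniquelattice};
\item lifting of $\gamma_2\gamma_1^{-1}$ from the surjectivity of $O(K)\to O(q_K)$ (his Theorem 1.14.2).
\end{itemize}
If you take 1.14.2 as a black box, your argument is complete. One small slip: the congruence you need for $M$ is not Milnor's theorem but the Milgram/van der Blij formula $\mathrm{sign}\,q_M\equiv t_+-t_-\pmod 8$, combined with $s_+\equiv s_-\pmod 8$ for $L$.

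The gap is in your sketch of that key lemma. The claim that the length condition gives $K\cong U\oplus K'$ is false. Nikulin's splitting criterion (Cor.~1.13.5) needs essentially $\rank K\ge l(A_K)+3$, while hypothesis (3) allows $\rank K=l(A_K)+2$. Concretely, take $M=A_2\oplus\langle -4\rangle$ and $L=U^3$. These satisfy (1)--(3), with $l(A_M)=1=6-3-2$. Gluing along $q_{M(-1)}=-q_M$ gives a primitive embedding with complement $K=A_2(-1)\oplus\langle 4\rangle$. A vector $ae_1+be_2+cw$ of $K$ is isotropic iff $a^2-ab+b^2=2c^2$. This has no nonzero rational solution, because $2$ is inert in $\Q(\sqrt{-3})$ and so is not a norm. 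Hence $K$ is anisotropic and contains no copy of $U$, and the step where ``the $U$ summand'' supplies reflections and Eichler transformations to correct spinor norms cannot be carried out.

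The correction has to be done locally, as in Nikulin's proof:
\begin{itemize}
\item Since $l(A_{K_p})\le l(A_K)\le\rank K-2$ for every $p$, the unimodular Jordan constituent of $K_p$ has rank at least $2$.
\item At $p=2$ this constituent is even unimodular, so it contains a binary $U$ or $V$ block.
\item Reflections in its vectors of unit norm (norm $2u$ at $p=2$) act trivially on $A_{K_p}$. They also realize determinant $-1$ and every unit spinor norm.
\end{itemize}
This, together with local surjectivity $O(K_p)\to O(q_{K_p})$, is exactly the input that strong approximation for $\mathrm{Spin}(K_\Q)$ requires.

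Also, $\rank K\ge 3$ is not guaranteed, contrary to what you assert: $M=U^2\subset U^3$ gives $K=U$. In that case $A_K=0$, so the lift is trivial, but the case should be set aside explicitly.
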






\begin{rmk}
Note that $l(A_M) \leq rank(M)$, so the third condition in
Theorem~\ref{uniqueembedding} may be replaced by the sufficient
condition $\rank(M) \leq rank(L)/2 - 1$.
\end{rmk}

\begin{thm}  \label{thm:fm-partners}
Fix an abelian surface $A$. There are only finitely many abelian
surfaces $B$ up to isomorphism such that there is a Hodge isometry
$T(A) \cong T(B)$.
\end{thm}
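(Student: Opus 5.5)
The plan is to reduce the finiteness for abelian surfaces to two classical finiteness results: the finiteness of the genus of a lattice, and the finiteness statement of Proposition~\ref{prop:TX-torelli}, in the form of its K3/abelian analogue via the global Torelli theorem for abelian surfaces (Shioda). Throughout, recall that $H^2(B,\Z)\cong U^3$ for every abelian surface $B$, with $NS(B)$ and $T(B)$ mutual orthogonal complements, both primitive in $U^3$.

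\textbf{Step 1: only finitely many Néron–Severi lattices occur.} If $T(A)\cong T(B)$ as lattices, then $NS(B)=T(B)^\perp\subset U^3$. Since $U^3$ is unimodular, the relation $q_{M^\perp}=-q_M$ for primitive sublattices gives $q_{NS(B)}\cong -q_{T(B)}\cong -q_{T(A)}$. Moreover the signature of $NS(B)$ is $(1,\rho-1)$, where $\rho=6-\rank T(A)$. So $NS(B)$ lies in a single genus, fixed by $A$. A genus contains finitely many isometry classes, so only finitely many lattices $N_1,\dots,N_r$ occur as $NS(B)$.

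\textbf{Step 2: only finitely many Hodge structures on $H^2$ occur.} Fix $N=N_j$. The lattice $H^2(B,\Z)$ is an overlattice of $NS(B)\oplus T(B)\cong N\oplus T(A)$, determined by a glue isomorphism $\gamma:A_N\to A_{T(A)}$ that is an anti-isometry of discriminant forms. There are finitely many such $\gamma$, since the discriminant groups are finite. Transport the Hodge structure of $T(A)$ and declare $N$ to be of type $(1,1)$. This gives finitely many weight-two Hodge structures on $U^3$, and those arising from some $B$ satisfy $H^2(B,\Z)\cong(U^3,\text{one of these})$ as Hodge lattices.

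\textbf{Step 3: Torelli.} By Shioda's Torelli theorem for abelian surfaces, a Hodge isometry $H^2(B,\Z)\cong H^2(B',\Z)$ implies that $B'\cong B$ or $B'\cong B^\vee$ (the ambiguity comes from orientation/the $\pm$ of $\det$ on $\bigwedge^2 H^1$). Hence each Hodge structure in Step 2 accounts for at most two isomorphism classes, and the total count is at most $2\cdot\sum_j\#\{\gamma\}$, which is finite.

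The main obstacle is Step 3. One must quote precisely the form of Torelli relating $H^2$ to $H^1$: a Hodge isometry of $H^2$ lifts, up to sign and possibly duality, to an isomorphism of $H^1$. The first thing I would try is the identification $\bigwedge^2 H^1(B,\Z)\cong H^2(B,\Z)$ together with the isomorphism $\mathrm{SL}_4(\Z)/\pm1\cong SO^+(U^3)$. This lifts orientation-preserving isometries; orientation-reversing ones correspond to passing to $B^\vee$. Step 1 additionally relies on the standard finiteness of classes in a genus, which is not stated earlier in the paper but is classical. Alternatively, one can bypass it by noting that Theorem~\ref{uniquelattice} applies when $\rank NS\ge 3$ and treating small $\rho$ by direct reduction theory.
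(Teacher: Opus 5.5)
Your argument is correct and is essentially the paper's. The paper cites Miranda--Morrison for the finiteness of primitive embeddings $T(A)\hookrightarrow U^3$ up to $O(U^3)$; your Steps 1--2 reprove exactly this via finiteness of the genus of $NS(B)$ and of the gluing data. The paper then extends $\phi_T$ to a Hodge isometry of $H^2$ and applies Torelli, as you do in Step 3.

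Your Step 3 is in fact more precise than the paper, which concludes $A\cong B$ outright, whereas Shioda's theorem only gives $B\cong A$ or $B\cong A^\vee$; this is harmless for finiteness. One small correction to your closing aside: Theorem~\ref{uniquelattice} is only guaranteed to apply to $NS(B)$ when $\rho\ge 4$, not $\rank NS\ge 3$. For $\rho=3$ one can have $l(A_{NS})=3>\rho-2$.
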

It is a theorem of Mukai and Orlov that $A$ and $B$ satisfy this
property iff they are Fourier-Mukai partners, i.e., their derived
categories of bounded complexes of coherent sheaves are
equivalent. \cite{Galluzzi-Lombardo}.

\begin{proof}
Denote the abstract lattice $T(A) \cong T(B)$ by $M$, and let $L$ be the
class of the abstract lattice $H^2(A,\Z) \cong H^2(B,Z) \cong
U^3$. Then by \cite{Miranda_Morrison}, there are finitely many classes
of embeddings $M \hookrightarrow L$, up to isometries of $L$. If the
embeddings $T(A) \hookrightarrow H^2(A,\Z)$ and $T(B) \hookrightarrow
H^2(B,\Z)$, identified by $\phi_T: T(A) \cong T(B)$, are the same up to
isometries of $L$, then it follows that the Hodge isometry $\phi_T$
can be extended to a Hodge isometry $\phi : H^2(A,\Z) \rightarrow
H^2(B,\Z)$ (since $NS(A) = T(A)^\perp$ lies in the $(1,1)$ part of the
Hodge decomposition and similarly for $B$) and then $A \cong B$ by
Torelli. Therefore, there are only finitely many possibilities for
$B$, up to isomorphism.
\end{proof} 

As a corollary, for a given K3 surface $X$ there are only finitely
many abelian surfaces $A$ related to $X$ by a Shioda-Inose structure
(since any other such $B$ would have $T(A) \cong T(X) \cong T(B)$ by
Hodge isometries). \\

We end with a brief mention of the theory of quadratic spaces over
$\Q$: it is significantly simpler than that of integral lattices. A
good reference is the book "Rational Quadratic Forms" \cite{Cassels}
by Cassels. We have used the following key lemma in the proof of
Lemma~\ref{lem:embedding-in-h2abelian}.

\begin{lem}[Witt cancellation]
Let $v, w_1, w_2$ be quadratic spaces over $\Q$, such that $v \oplus
w_1 \cong v \oplus w_2$. Then $w_1 \cong w_2$.
\end{lem}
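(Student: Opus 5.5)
We argue by induction on $\dim v$, reducing to the case where $v$ is one-dimensional and anisotropic. The one-dimensional case is then handled with reflections. Throughout, quadratic spaces are taken to be non-degenerate, as in every application in this paper: in Lemma~\ref{lem:embedding-in-h2abelian} all of $t$, $t^\perp$ and $u$ are non-degenerate. The degenerate case is addressed at the end.

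\emph{Reduction.} Since we are over $\Q$, where $2$ is invertible, $v$ can be diagonalized: $v \cong \langle a_1\rangle \oplus \cdots \oplus \langle a_m \rangle$ with all $a_i \neq 0$. By associativity of orthogonal sums, $v\oplus w_1\cong v\oplus w_2$ reads $\langle a_1\rangle \oplus (v'\oplus w_1) \cong \langle a_1\rangle\oplus(v'\oplus w_2)$, where $v' = \langle a_2,\dots,a_m\rangle$. So it suffices to prove cancellation of a single $\langle a\rangle$ with $a\neq 0$, and then induct on $m$.

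\emph{Key step.} Let $\sigma\colon \langle a\rangle \oplus w_1 \to \langle a\rangle\oplus w_2$ be an isometry. Let $e$ generate $\langle a\rangle$ in the source and $f$ generate it in the target. Put $x=\sigma(e)$, so that $q(x)=q(f)=a$. We seek an isometry $\tau$ of the target space $V$ with $\tau(x)=f$. For an anisotropic vector $z$, the reflection $s_z(y)=y-2\frac{\langle y,z\rangle}{q(z)}z$ is an isometry.
\begin{itemize}
\item If $q(x-f)\neq 0$, take $\tau = s_{x-f}$. A direct computation using $q(x)=q(f)$ gives $s_{x-f}(x)=f$.
\item Otherwise, the identity $q(x+f)+q(x-f)=2q(x)+2q(f)=4a\neq 0$ shows $q(x+f)\neq 0$. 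Then $s_{x+f}(x)=-f$, and composing with $s_f$ (which sends $-f$ to $f$) gives $\tau = s_f\circ s_{x+f}$.
\end{itemize}
The composite $\tau\sigma$ is an isometry sending $e$ to $f$. It therefore maps $e^\perp=w_1$ isometrically onto $f^\perp=w_2$, which gives $w_1\cong w_2$.

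\emph{Expected obstacle.} The only subtle point is the key step: choosing the right reflection when $x-f$ is isotropic. The identity $q(x+f)+q(x-f)=4a$ resolves this, and it is exactly where $\operatorname{char}\neq 2$ and $a\neq 0$ are used.

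\emph{Degenerate case.} If one wants to allow degenerate spaces, split off radicals: $v\oplus w_i \cong \mathrm{rad}\oplus(\text{non-degenerate part})$. Both the dimension of the radical and the isometry class of the non-degenerate part are invariants of $v\oplus w_i$. The statement then reduces to the non-degenerate case applied to the non-degenerate parts, together with comparing dimensions of radicals.
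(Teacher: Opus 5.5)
Your proof is correct. Diagonalizing $v$ reduces the statement to cancelling a single $\langle a\rangle$ with $a\neq 0$. The reflection $s_{x-f}$, or $s_f\circ s_{x+f}$ when $x-f$ is isotropic (justified by $q(x+f)+q(x-f)=4a$), moves $\sigma(e)$ to $f$. An isometry with $e\mapsto f$ then carries $e^\perp=w_1$ onto $f^\perp=w_2$, and your radical-splitting remark handles degenerate spaces. The paper does not prove this lemma at all; it quotes it as a standard fact with a reference to Cassels. What you wrote is the standard reflection proof, and it covers the non-degenerate cancellations used in Lemma~\ref{lem:embedding-in-h2abelian}.
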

 
\bibliography{KKL}

@article{nik,
	author = {Nikulin, V. V.},
	fjournal = {Izvestiya Akademii Nauk SSSR. Seriya Matematicheskaya},
	issn = {0373-2436},
	journal = {Izv. Akad. Nauk SSSR Ser. Mat.},
	mrclass = {10C05 (14G30 14J17 14J25 57M99 57R45 58C27)},
	mrreviewer = {I.\ Dolgachev},
	number = {1},
	pages = {111--177, 238},
	title = {Integer symmetric bilinear forms and some of their geometric applications},
	volume = {43},
	year = {1979}}

@article{kn,
	author = {Kneser, M.},
	doi = {10.1007/BF01900681},
	fjournal = {Archiv der Mathematik},
	issn = {0003-889X,1420-8938},
	journal = {Arch. Math. (Basel)},
	mrclass = {10.0X},
	mrreviewer = {B.\ W.\ Jones},
	pages = {323--332},
	title = {Klassenzahlen indefiniter quadratischer {F}ormen in drei oder mehr {V}er\"anderlichen},
	url = {https://doi.org/10.1007/BF01900681},
	volume = {7},
	year = {1956}}

@incollection{mil,
	author = {Milnor, John},
	booktitle = {Symposium internacional de topolog\'ia algebraica {I}nternational symposium on algebraic topology},
	mrclass = {55.00},
	mrreviewer = {H.\ Samelson},
	pages = {122--128},
	publisher = {Universidad Nacional Aut\'onoma de M\'exico and UNESCO, M\'exico},
	title = {On simply connected {$4$}-manifolds},
	year = {1958}}

@article{mar,
	author = {Markman, E.},
	doi = {10.1142/S0129167X10005957},
	fjournal = {International Journal of Mathematics},
	issn = {0129-167X,1793-6519},
	journal = {Internat. J. Math.},
	mrclass = {14C05 (14C34 14J28 14J60 32G05 53C26)},
	mrreviewer = {Zhenbo\ Qin},
	number = {2},
	pages = {169--223},
	title = {Integral constraints on the monodromy group of the hyper{K}\"ahler resolution of a symmetric product of a {$K3$} surface},
	url = {https://doi.org/10.1142/S0129167X10005957},
	volume = {21},
	year = {2010}}

@article{Galluzzi-Lombardo,
 author = {Galluzzi, Federica and Lombardo, Giuseppe},
 title = {Correspondences between {{\(K3\)}} surfaces (with an appendix by {Igor} {Dolgachev})},
 fjournal = {Michigan Mathematical Journal},
 journal = {Mich. Math. J.},
 issn = {0026-2285},
 volume = {52},
 number = {2},
 pages = {267--277},
 year = {2004},
 language = {English},
 doi = {10.1307/mmj/1091112075},
 zbMATH = {2112901},
 Zbl = {1067.14032}
}

@article{Miranda_Morrison,
 author = {Miranda, Rick and Morrison, David R.},
 title = {The number of embeddings of integral quadratic forms. {II}},
 fjournal = {Proceedings of the Japan Academy. Series A},
 journal = {Proc. Japan Acad., Ser. A},
 issn = {0386-2194},
 volume = {62},
 pages = {29--32},
 year = {1986},
 language = {English},
 doi = {10.3792/pjaa.62.29},
 zbMATH = {3955007},
 Zbl = {0594.10012}
}

@article{bus,
	author = {Buskin, N.},
	doi = {10.1515/crelle-2017-0027},
	fjournal = {Journal f\"ur die Reine und Angewandte Mathematik. [Crelle's Journal]},
	issn = {0075-4102,1435-5345},
	journal = {J. Reine Angew. Math.},
	mrclass = {14J28 (14C30 32J15)},
	mrreviewer = {I.\ Dolgachev},
	pages = {127--150},
	title = {Every rational {H}odge isometry between two {$K3$} surfaces is algebraic},
	url = {https://doi.org/10.1515/crelle-2017-0027},
	volume = {755},
	year = {2019}}

@incollection{muk,
	author = {Mukai, S.},
	booktitle = {Vector bundles on algebraic varieties ({B}ombay, 1984)},
	isbn = {0-19-562014-3},
	mrclass = {14J28 (14D22 14F05)},
	mrreviewer = {Mei\ Chu\ Chang},
	pages = {341--413},
	publisher = {Tata Inst. Fund. Res., Bombay},
	series = {Tata Inst. Fund. Res. Stud. Math.},
	title = {On the moduli space of bundles on {$K3$} surfaces. {I}},
	volume = {11},
	year = {1987}}

@unpublished{pm,
	author = {Prieto-Montanez, Y.},
	note = {arXiv:2408.16610},
	title = {On Hyperk\"ahler manifolds of {$K3^{[n]}$}-type with large Picard number},
	year = {2024}}

@article{po,
	author = {Piroddi, B. and R\'ios Ortiz, \'A.},
	doi = {10.1016/j.jpaa.2024.107805},
	fjournal = {Journal of Pure and Applied Algebra},
	issn = {0022-4049,1873-1376},
	journal = {J. Pure Appl. Algebra},
	mrclass = {14J42},
	mrreviewer = {Giacomo\ Mezzedimi},
	number = {1},
	pages = {Paper No. 107805, 22},
	title = {On the transcendental lattices of hyper-{K}\"ahler manifolds},
	url = {https://doi.org/10.1016/j.jpaa.2024.107805},
	volume = {229},
	year = {2025}}

@article{mor,
	author = {Morrison, D. R.},
	doi = {10.1007/BF01403093},
	fjournal = {Inventiones Mathematicae},
	issn = {0020-9910,1432-1297},
	journal = {Invent. Math.},
	mrclass = {14J28 (14C30 14J05 14K10)},
	mrreviewer = {I.\ Dolgachev},
	number = {1},
	pages = {105--121},
	title = {On {$K3$}\ surfaces with large {P}icard number},
	url = {https://doi.org/10.1007/BF01403093},
	volume = {75},
	year = {1984}}

@article{L-max,
	author = {Laza, Radu},
	doi = {10.1090/proc/15430},
	fjournal = {Proceedings of the American Mathematical Society},
	issn = {0002-9939,1088-6826},
	journal = {Proc. Amer. Math. Soc.},
	mrclass = {14J42 (14E08 14J28 14J35)},
	mrnumber = {4273129},
	mrreviewer = {I.\ Dolgachev},
	number = {8},
	pages = {3209--3220},
	title = {Maximally algebraic potentially irrational cubic fourfolds},
	url = {https://doi.org/10.1090/proc/15430},
	volume = {149},
	year = {2021}}

@book{Cassels,
 author = {Cassels, J. W. S.},
 title = {Rational quadratic forms},
 fseries = {London Mathematical Society Monographs},
 series = {Lond. Math. Soc. Monogr.},
 volume = {13},
 year = {1978},
 publisher = {Academic Press, London},
 language = {English},
 zbMATH = {3613145},
 Zbl = {0395.10029}
}

@article{Markman_rational,
 author = {Markman, Eyal},
 title = {Rational {Hodge} isometries of hyper-{K{\"a}hler} varieties of {{\(K3^{[n]}\)}} type are algebraic},
 fjournal = {Compositio Mathematica},
 journal = {Compos. Math.},
 issn = {0010-437X},
 volume = {160},
 number = {6},
 pages = {1261--1303},
 year = {2024},
 language = {English},
 doi = {10.1112/S0010437X24007048},
 zbMATH = {7852901},
 Zbl = {1551.14161}
}

@article{Beauville,
 author = {Beauville, Arnaud},
 title = {Vari{\'e}t{\'e}s k{\"a}hleriennes dont la premi{\`e}re classe de {Chern} est nulle},
 fjournal = {Journal of Differential Geometry},
 journal = {J. Differ. Geom.},
 issn = {0022-040X},
 volume = {18},
 pages = {755--782},
 year = {1983},
 language = {French},
 doi = {10.4310/jdg/1214438181},
 zbMATH = {3853948},
 Zbl = {0537.53056}
}

@article{LSV,
 author = {Laza, Radu and Sacc{\`a}, Giulia and Voisin, Claire},
 title = {A hyper-{K{\"a}hler} compactification of the intermediate {Jacobian} fibration associated with a cubic 4-fold},
 fjournal = {Acta Mathematica},
 journal = {Acta Math.},
 issn = {0001-5962},
 volume = {218},
 number = {1},
 pages = {55--135},
 year = {2017},
 language = {English},
 doi = {10.4310/ACTA.2017.v218.n1.a2},
 zbMATH = {6826204},
 Zbl = {1409.14053}
}

@article{MRS,
 author = {Mongardi, Giovanni and Rapagnetta, Antonio and Sacc{\`a}, Giulia},
 title = {The {Hodge} diamond of {O}'{Grady}'s six-dimensional example},
 fjournal = {Compositio Mathematica},
 journal = {Compos. Math.},
 issn = {0010-437X},
 volume = {154},
 number = {5},
 pages = {984--1013},
 year = {2018},
 language = {English},
 doi = {10.1112/S0010437X1700803X},
 zbMATH = {6908312},
 Zbl = {1420.14095}
}
\end{document}